\documentclass[11pt]{article}

\usepackage[a4paper,margin=0.91in]{geometry}
\usepackage{amsmath,amssymb,amsthm,mathtools}
\usepackage{enumitem}
\usepackage{booktabs,array,longtable}
\usepackage[hidelinks]{hyperref}
\usepackage[nameinlink,capitalize]{cleveref}
\usepackage{todonotes}

\title{Small complete 3-term progression free sets in cyclic groups and vector spaces}
\author{Bence Csajb\'ok \thanks{Department of Computer Science, 
		ELTE E\"otv\"os Lor\'and University, 
		H-1117 Budapest, P\'azm\'any P.\ stny.\ 1/C, Hungary. This paper was supported by the János Bolyai Research Scholarship of the Hungarian Academy of Sciences and partially by the National Research, Development and Innovation Fund – grant numbers ADVANCED 153080, EXCELLENCE 151504 and SNN 152582. {\texttt{bence.csajbok@ttk.elte.hu}}} \and Zolt\'an Lóránt Nagy\thanks{ Department of Computer Science, 
		ELTE  E\"otv\"os Lor\'and University, Budapest, Hungary. The author is supported by  the J\'anos Bolyai Research Grant of the Hungarian Academy of Sciences and  partially by the NRDI EXCELLENCE grant, no.  151504;  	E-mail: {\tt zoltan.lorant.nagy@ttk.elte.hu.}}}
\date{}

\newtheorem{theorem}{Theorem}[section]
\newtheorem{lemma}[theorem]{Lemma}
\newtheorem{corollary}[theorem]{Corollary}
\newtheorem{proposition}[theorem]{Proposition}
\newtheorem{remark}[theorem]{Remark}
\newtheorem{definition}[theorem]{Definition}

\crefname{theorem}{Theorem}{Theorems}
\crefname{lemma}{Lemma}{Lemmas}
\crefname{corollary}{Corollary}{Corollaries}
\crefname{proposition}{Proposition}{Propositions}
\crefname{remark}{Remark}{Remarks}
\crefname{definition}{Definition}{Definitions}
\crefname{section}{Section}{Sections}

\newcommand{\Z}{\mathbb Z}
\newcommand{\F}{\mathbb F}
\newcommand{\dotminus}{\mathbin{\dot{-}}}
\newcommand{\AP}{\mathrm{AP}}

\newcommand{\aavo}{a_{(2,-1)}}

\begin{document}
	\maketitle
	
	\begin{abstract}
		A classical extremal problem on progression free sets is to determine the maximum size of a $3$-term arithmetic progression free set in algebraic structures, for instance in intervals of integers  or in finite vector spaces.  To determine the minimum size of a complete $3$-term arithmetic progression free set is a lower-end analogue of this problem.  It is also closely related to complete caps and saturating sets in finite geometry.
		
		A simple counting argument shows that  the order of magnitude of the minimum size is at least the square root of the cardinality of the structure. Addressing two open problems, we show that this lower  bound is essentially tight.  First,  for every cyclic group $\Z_m$, we give explicit constructions of complete $3$-AP-free sets whose size is less than $2\sqrt m$.  For $m\ge81$ the constructed sets satisfy the stronger, so-called complete $(2,-1)$-avoiding property; the remaining cases $m<81$ are covered by a finite verification.  Second, we resolve the vector space variant in a weaker sense by showing that for every fixed odd prime $p$ and $\varepsilon>0$,  there is a constant $C_{p, \varepsilon}$ such that 
		\[
		a(3\text{-}\AP,\F_p^n)\le C_{p, \varepsilon}\,n^{1+\varepsilon}\,p^{n/2}
		=p^{n/2+o(n)}
		\] holds for the  minimum size  $a(3\text{-}\AP,\F_p^n)$ of a complete 3-AP-free subset of $\F_p^n$,  
		for all $n\ge1$.  %This gives an all-dimension vector-space result for ordinary complete $3$-AP-free sets, in the direction of a question posed by Csajb\'ok and Nagy (2024, JCTA).
		
		%The cyclic construction is based on flexible binary interval blocks.  Given integers $d_1=1,d_2,\ldots,d_n$ satisfying
		%\[       2S_i+1\le d_{i+1}\le 3S_i+1,     \qquad S_i=d_1+\cdots+d_i,\]
		%the subset-sum set $\{\sum\varepsilon_i d_i:\varepsilon_i\in\{0,1\}\}$ is an integer block which is both $(2,-1)$-avoiding and interval-saturating.  Varying the digits fills all moduli needed between powers of $3$ and powers of $4$.  The vector-space construction uses two-sided complete quadratic graphs over a subfield flag, followed by a padding argument.
	\end{abstract}
	
	\noindent\textbf{Keywords.} complete $3$-term progression free set; complete $(2,-1)$-avoiding set; complete cap; saturating set; cyclic group; vector space; additive basis; digit construction.\\
	\textbf{MSC 2020.} 05B25, 11B30, 11B75, 20K01, 52C10.
	
	\section{Introduction}
	
	Let $G$ be an abelian group, written additively.  A $3$-term arithmetic progression, or $3$-AP, is a set of three distinct elements of the form
	\[
	g,\quad g+d,\quad g+2d .
	\]
	A set is $3$-AP-free if it contains no such progression, and it is complete $3$-AP-free if it is maximal with respect to this property.  The maximum-size problem for $3$-AP-free sets is a central theme of additive combinatorics, going back to the Salem-Spencer and the Behrend construction \cite{Behrend1946, Salem} and Roth's theorem \cite{Roth1953} over the integers, Meshulam's finite-group bounds \cite{Meshulam1995}, and, in vector spaces, the polynomial-method breakthroughs of Croot--Lev--Pach \cite{CLP2017} and Ellenberg--Gijswijt \cite{EG2017}.  The general abelian-group setting was studied by Frankl, Graham and R\"odl \cite{FGR1987}; see also Shkredov's survey \cite{Shkredov2006}.
	
	This paper concerns the opposite end of the spectrum: how small can a complete progression-free set be?  This question is naturally a saturation problem.  A $3$-AP-free set $A$ is complete precisely when every point $x\in G\setminus A$ lies in a $3$-term progression together with two points of $A$.  In a previous paper, the authors \cite{CsajbokNagy2025} developed this viewpoint for vector spaces, cyclic groups and more general abelian groups.  In particular, they introduced $W$-avoiding and $W$-saturating sets, proved direct-product mechanisms for fixed coefficient vectors, and related the problem to complete caps and saturating sets in finite affine spaces.
	
	Several tight or near-tight cases were already obtained there.  For example, algebraic constructions in two-dimensional finite vector spaces give complete $3$-AP-free sets of square-root size under explicit nonsquare hypotheses, and direct-product methods extend these to further infinite families of dimensions. The authors  \cite[Problem~5.1]{CsajbokNagy2025} asked whether the natural square-root lower bound is tight up to an absolute constant in every vector space.  Our second main result gives an upper bound $C_{p, \varepsilon} n^{1+\varepsilon} p^{n/2}=p^{n/2+o(n)}$ for ordinary complete $3$-AP-free sets over each fixed odd prime field $\F_p$, for all dimensions.  The stronger absolute-constant form  $C'_{p}  p^{n/2}$ remains open.  The same paper also posed the corresponding cyclic-group problem, and proved an upper bound for a positive fraction of the integers which is tight up to a small  multiplicative constant.
	
	The connection with finite geometry is especially transparent in vector spaces.  A cap in an affine space is a point set meeting each line in at most two points; it is complete if it cannot be enlarged while preserving this property.  Thus complete caps are geometric counterparts of complete $3$-AP-free sets.  Saturating sets have the complementary covering property that every outside point is incident with a secant determined by two points of the set.  The authors make this correspondence explicit in \cite[Proposition~2.10]{CsajbokNagy2025}.  Complete caps and saturating sets are also central in finite geometry and coding theory, see Giulietti's survey \cite{Giulietti2013} and the references therein.  %For recent progress on small complete capsets over $\F_3$, see Grace and Voloch \cite{GraceVoloch2026}.
	
	The construction below uses  complete $(2,-1)$-avoiding sets.  For $A\subseteq G$ put
	\[
	2A\dotminus A=\{2a-b:\ a,b\in A,\ a\ne b\}.
	\]
	We say that $A$ is $(2,-1)$-avoiding  if
	\[
	A\cap(2A\dotminus A)=\emptyset,
	\]
	and complete $(2,-1)$-avoiding (in $G$) if, in addition,
	\[
	G=A\cup(2A\dotminus A).
	\]
	Thus no non-trivial relation $c=2a-b$ occurs inside $A$, and every outside point is covered as an endpoint of a progression whose other endpoint and midpoint lie in $A$.  This is stronger than completeness of $3$-AP-free sets:  the latter completion allows the outside point to appear either as an endpoint, $x=2a-b$, or as a midpoint, $2x=a+b$.  This will be exploited later on.%The advantage of the stronger condition is that it behaves well under  product constructions, as  demonstated by the authors in \cite[Proposition~2.3 and Corollary~2.4]{CsajbokNagy2025}.
	
	\begin{definition}
		Let $a(3\text{-}\AP,G)$ denote the minimum size of a complete $3$-AP-free subset of $G$, and let $\aavo(G)$ denote the minimum size of a complete $(2,-1)$-avoiding subset of $G$, with the convention $\aavo(G)=\infty$ if no such set exists.     
	\end{definition}
	The trivial counting bound gives
	\[
	\aavo(G)\ge (1+o(1))\sqrt{|G|},
	\]
	since $2A\dotminus A$ has at most $|A|(|A|-1)$ elements.  Our first main theorem gives an explicit upper bound with constant $2$ for every cyclic group.
	
	\begin{theorem}
		\label{thm:main}
		For every integer $m\ge1$ there exists a complete $3$-AP-free set $A\subseteq\Z_m$ such that
		\[
		|A|< 2\sqrt m .
		\]
		Moreover, for every $m\ge81$ the set $A$ may be chosen complete $(2,-1)$-avoiding.  In particular,
		\[
		a(3\text{-}\AP,\Z_m)< 2\sqrt m
		\qquad (m\ge1)
		\]
		and
		\[
		\aavo(\Z_m)<2\sqrt m
		\qquad (m\ge81).
		\]
	\end{theorem}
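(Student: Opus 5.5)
The plan is a digit (subset-sum) construction, carried out over the integers and then reduced modulo $m$. Fix integers $d_1=1,d_2,\dots,d_n$ with $S_i=d_1+\dots+d_i$ and
\[
2S_i+1\le d_{i+1}\le 3S_i+1 \qquad (1\le i<n),
\]
and put $B_n=\{\sum_i\varepsilon_i d_i:\varepsilon_i\in\{0,1\}\}\subseteq[0,S_n]$. Since the lower bound on $d_{i+1}$ makes all subset sums distinct, $|B_n|=2^n$. I will prove two integer statements by induction on $n$, using $B_{i+1}=B_i\cup(B_i+d_{i+1})$:

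\begin{enumerate}[label=(\roman*)]
\item \emph{Avoidance.} $B_n\cap(2B_n\dotminus B_n)=\emptyset$ in $\Z$.
\item \emph{Interval saturation.} Every integer in $[-S_n,2S_n]\setminus B_n$ lies in $2B_n\dotminus B_n$.
\end{enumerate}

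The base case is $B_1=\{0,1\}$: the elements $2\cdot0-1=-1$ and $2\cdot1-0=2$ cover $[-1,2]\setminus B_1$.

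For (i), suppose $c=2a-b$ with $a\neq b$. Write $a=e_a d+a'$, and similarly for $b$ and $c$, where $d=d_{i+1}$ and $a',b',c'\in B_i$. Then
\[
(e_c-2e_a+e_b)\,d=2a'-b'-c'\in[-2S_i,2S_i].
\]
Since $d\ge 2S_i+1$, this forces $e_c-2e_a+e_b=0$. With $e\in\{0,1\}$ the only solutions are $e_a=e_b=e_c$. In that case $a'\ne b'$ and $c'=2a'-b'$, which contradicts the induction hypothesis.

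For (ii), the value $k=2e_a-e_b$ runs over $\{-1,0,1,2\}$, and the corresponding shifts give the pieces $kd+[-S_i,2S_i]$. Consecutive pieces leave no gap exactly because $d\le 3S_i+1$, and together they cover $[-S_{i+1},2S_{i+1}]$. A point $x=kd+y$ with $y\notin B_i$ is covered by induction using the lower parts. A point with $y\in B_i$ and $k\in\{-1,2\}$ is covered by taking $a'=b'=y$; this is allowed because $e_a\ne e_b$ already guarantees $a\ne b$. The cases $k=0,1$ with $y\in B_i$ give points of $B_{i+1}$ itself.

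Next, pass to $\Z_m$ with $m=3S_n+1$. Any difference $2a-b-c$ lies in $[-2S_n,2S_n]$, so it has absolute value less than $m$. Hence no relation is created by wrap-around, and (i) holds mod $m$. The interval $[-S_n,2S_n]$ has exactly $m$ elements, so (ii) gives $\Z_m=B_n\cup(2B_n\dotminus B_n)$. Thus $B_n$ is complete $(2,-1)$-avoiding in $\Z_{3S_n+1}$.

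Counting: $S_{i+1}=S_i+d_{i+1}$ ranges over every integer in $[3S_i+1,4S_i+1]$. By induction, the achievable values of $S_n$ form the full interval $I_n=[(3^n-1)/2,(4^n-1)/3]$. For $n\ge 3$ or so, consecutive intervals $I_n,I_{n+1}$ overlap, so every large $S$ is achievable. Taking the least such $n$ gives $S>(4^{n-1}-1)/3$. Then $4^n<4(3S+1)$, that is, $2^n<2\sqrt m$.

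The main obstacle is moduli $m\not\equiv1\pmod 3$, since the construction above only produces $m=3S+1$. My first attempt would be to break the symmetry at one digit. For example, let $d_1$ or $d_2$ violate the bounds slightly, or add one or two extra elements at the ends of the range, so that the saturated interval has length $3S+2$ or $3S+3$ while (i) survives modulo $m$ up to a small checked set of exceptions. Alternatively I would use a translate-plus-reflection variant, whose covered interval $[-S,2S+r]$ is set up by choosing the top digit in an extended range. This has to be done while keeping $|A|\le 2^n+O(1)$ and the strict inequality $|A|<2\sqrt m$. I expect the bookkeeping to work only for $m\ge81$, so the cases $m<81$, as the statement indicates, would be settled by a computer search. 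That search only needs to find complete $3$-AP-free sets, where midpoint coverings $2x=a+b$ are also allowed.
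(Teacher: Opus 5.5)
Your integer part is correct and coincides with the paper's. Statements (i)--(ii) say exactly that $B_n$ is an $S_n$-complete set (\cref{lem:binaryinterval}), and your description of the achievable $S_n$ as the full interval $I_n=[(3^n-1)/2,(4^n-1)/3]$ is \cref{cor:manybinaryseeds}.

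The gap is in the reduction step. You insist that $[-S_n,2S_n]$ be exactly a complete residue system, so you only obtain $m=3S_n+1$. This leaves every $m\not\equiv1\pmod 3$ to modifications you do not carry out. It also leaves holes among $m\equiv1\pmod 3$. The intervals $I_n$ are separated by gaps for $n\le5$: $I_3=[13,21]$, $I_4=[40,85]$, $I_5=[121,341]$, $I_6=[364,1365]$. So, for instance, $m=100$ would need $S_n=33$, which is never achievable. Your proposed repairs are also problematic as stated. For $4^{n-1}<m\le 4^{n-1}+2^{n-1}$ one has $2\sqrt m<2^n+1$. So no set of size at least $2^n+1$ meets the strict bound, and adding even one element is not allowed there.

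The missing idea is that no modification is needed, because the reduction has slack. The same $S_n$-complete set $B_n$ is complete $(2,-1)$-avoiding in $\Z_m$ for every $m$ with $2S_n<m\le 3S_n+1$ (\cref{lem:modreduction}).
\begin{itemize}
\item Avoidance only uses $|2a-b-c|\le 2S_n<m$, together with injectivity of reduction on $[0,S_n]$.
\item For saturation, take $r\in[0,m-1]$ not in the image of $B_n$. If $r\le 2S_n$, use $y=r$. Otherwise use $y=r-m\in[-S_n,-1]$, which is possible precisely because $m\le 3S_n+1$. In both cases $y\in[-S_n,2S_n]\setminus B_n$, so $y=2a-b$ with $a\ne b$ by (ii).
\end{itemize}
As $S_n$ runs over $I_n$, the intervals $[2S_n+1,3S_n+1]$ for consecutive values of $S_n$ overlap. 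Together they cover all of $[3^n,4^n]$.

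For $m\ge 81$, choose $n\ge4$ with $4^{n-1}<m\le 4^n$. Then $3^n\le m$: this is direct for $n=4$, and follows from $3^n\le 4^{n-1}$ for $n\ge5$. So some admissible $S_n$ works, and $|B_n|=2^n<2\sqrt m$. This settles all $m\ge 81$ uniformly, with the finite search for $m<81$ as you planned.
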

	The authors \cite{CsajbokNagy2025} showed a slightly stronger result 
	$a(3\text{-}\AP,\Z_m)\le \frac32\sqrt m$, when $m$ satisfied the relation $\frac{2}{3}4^t<m\le4^t$ for some $t \in \mathbb{N}$. 
	We remark here that there is an unexpected obstacle for proving an extension for all $m$: there are a few values of $m$ for which complete $(2, -1)$ avoiding sets does not exist at all, see Section \ref{sec:small-moduli}.
	
	Concerning the vector-space variant of the problem, several  constructions were obtained by the authors \cite{CsajbokNagy2025} for structured families of finite vector spaces, using conic-type cap constructions and direct-product methods, which matched the order of magnitude of the lower bound. Some technical restrictions were required  though, either on the dimension or on the order of the field.
	These results show that the trivial lower bound is essentially sharp in a number of natural cases, but they did not provide a general upper bound   $p^{n/2+o(n)}$ over a fixed field $\F_p$, when $-2$ is a square element in $\F_p$. In fact, one of the concluding problems of~\cite{CsajbokNagy2025} asked whether the square-root lower bound $p^{n/2}$ for complete \(3\)-AP-free sets in vector spaces is sharp up to a constant factor uniformly in the dimension. In characteristic \(3\), the recent algebraic capset construction of Grace and Voloch~\cite{GraceVoloch2026} gives complete capsets of size  $2\cdot (3^{n/2}-1)$ for $n$ even. Our second main result is the following.
	
	\begin{theorem}
		\label{thm:vector-main}
		
		Let \(p\) be an odd prime and let \(\varepsilon>0\).  Then there is a constant
		\(C_{p,\varepsilon}\) such that, for every \(n\ge1\),
		\[
		a(3\text{-}\mathrm{AP},\mathbb F_p^n)
		\le
		C_{p,\varepsilon}\,n^{1+\varepsilon}p^{n/2}.
		\]
		In particular, for every fixed odd prime \(p\),
		\[
		a(3\text{-}\mathrm{AP},\mathbb F_p^n)
		=
		p^{n/2+o(n)}.
		\]
	\end{theorem}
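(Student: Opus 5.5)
The plan is to build a complete $3$-AP-free set of $\F_p^n$ in three stages. First, find a square-root size complete set in $\F_{q}^{2}$ (or $\F_q$-structures) with $q=p^k$. Then lift it through a flag of subfields. Finally, pad to handle dimensions that do not fit the flag. The construction sketched in the commented abstract, ``two-sided complete quadratic graphs over a subfield flag, followed by a padding argument'', suggests exactly this route.

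\emph{Base object.} Identify $\F_p^{2k}$ with $\F_q^2$, $q=p^k$, and take the graph of a quadratic map,
\[
A_0=\{(x,\,x^2):x\in\F_q\}\cup\{(x,\,\lambda x^2+\mu):x\in\F_q\},
\]
a ``two-sided'' union of two parabolas with $\lambda,\mu$ chosen in $\F_q$. A parabola is a cap, since no three of its points are collinear. The points $(u,v)$ are covered through $2a-b$ and $(a+b)/2$. This reduces to solving a quadratic in $x$ whose discriminant is an affine function of $v-u^2$. With one parabola, only those points whose discriminant is a square (respectively nonsquare) are covered. The second parabola is tuned so that it covers the other quadratic class, and $\lambda,\mu$ are chosen to keep the union AP-free. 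This is where the condition on whether $-2$ is a square enters, so $\lambda$ may need to be a nonsquare or lie in a subfield.

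The step I expect to be the main obstacle is proving that the union is still progression-free while being complete. Cross-progressions between the two parabolas satisfy a quadratic condition, and one needs $\lambda$ chosen so that this condition has no solutions. I would try $\lambda$ such that $(2-\lambda)$ or a similar expression is a nonsquare, possibly working over $\F_{q^2}\supset\F_q$, which is the point of using a subfield flag. This yields $|A_0|\le 2q=2p^{k}$ in dimension $2k$.

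\emph{Flag and general dimension.} This step has two parts.

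\textbf{Lifting along the flag.} If the base step only works when $k$ has a special form, for example a power of $2$ or divisible by a fixed number, use a flag $\F_p\subset\F_{p^{k_1}}\subset\cdots$. At each level, apply the direct-product mechanisms from \cite{CsajbokNagy2025} for $(2,-1)$-avoiding and saturating pieces to combine the structures. Each level costs a constant factor. With $\log$-many levels this gives a factor of $n^{O(1)}$, and choosing the branching carefully, for example with ratios tending to infinity slowly, should bring it down to $n^{1+\varepsilon}$. The constant $C_{p,\varepsilon}$ absorbs the small levels.

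\textbf{Padding.} For an arbitrary $n$, write $n=2k+r$ with $r$ small, or bounded by about $\varepsilon$-dependent lower-order terms. Take a complete set $B$ in $\F_p^{2k}$. Embed it as $B\times\{0\}$ in $\F_p^{2k}\times\F_p^{r}$ and add a covering set: a set meeting the cosets $\F_p^{2k}\times\{y\}$, using a complete AP-free set $D$ of $\F_p^r$ and product pieces $B'\times\{y\}$ for $y\in D$. This costs at most a factor depending on $p^{r/2}$, i.e.\ $p^{n/2}$ times a polynomial in $n$. Completeness across fibers follows because a point $(u,y)$ is covered by a midpoint or endpoint relation in the $y$-coordinate from $D$ combined with one in the $u$-coordinate from $B$. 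This is the product lemma for mixed $3$-AP/$(2,-1)$ completeness.

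Finally, extend greedily. Any AP-free set can be completed, and greedy completion only adds points, so the bound is an upper bound on some complete set. The $p^{n/2+o(n)}$ statement then follows together with the counting lower bound.
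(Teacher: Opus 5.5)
\textbf{There is a genuine gap, and it sits in the base object.} You never show that $A_0=\{(x,x^2)\}\cup\{(x,\lambda x^2+\mu)\}\subseteq\F_q^2$ is both $3$-AP-free and complete, and for large $q$ it is not.
\begin{itemize}
\item If $\lambda\neq1$ and $\mu\neq0$, the points $(b,b^2),(a,a^2),(2a-b,\lambda(2a-b)^2+\mu)$ form a progression whenever $2a^2-b^2-\lambda(2a-b)^2=\mu$. This binary form has discriminant $8(1-\lambda)\neq0$, so the conic has $q\pm1$ points, at most two of them with $a=b$. Hence $A_0$ contains a $3$-AP.
\item If $\lambda=1$, the four mixed patterns show that AP-freeness forces $\mu,-\mu,2\mu,-2\mu$ all to be nonsquares. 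Then $(u,u^2+h)$ lies on a progression with two points of $A_0$ only if one of $h,\ h-\mu,\ h-\mu/2,\ h+\mu,\ h-2\mu$ is a square. By the Weil bound all five are nonsquares for a positive proportion of $h$, so $A_0$ is not complete once $q$ is large.
\item The case $\mu=0$ is handled by a similar character-sum analysis.
\end{itemize}
You should have been suspicious anyway. A complete set of size $2p^k$ in every dimension $2k$ would give $a(3\text{-}\AP,\F_p^n)=O(p^{n/2})$, which is exactly the absolute-constant problem the paper leaves open. Nor does ``extend greedily'' help: completion adds an uncontrolled number of points, so a size bound for the incomplete set says nothing about the completed one.

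\textbf{The missing idea is the graph of a vector-valued quadratic map.} If $Q:U\to W$ is homogeneous quadratic with $Q^{-1}(0)=\{0\}$ and $Q(U)=W$, then $A_Q=\{(u,Q(u))\}$ is $(2,-1)$-avoiding, by $Q(u+t)+Q(u-t)=2Q(u)+2Q(t)$. Moreover each $(u,Q(u)+h)$ with $h\neq0$ is simultaneously an endpoint, namely $2(u+t,Q(u+t))-(u+2t,Q(u+2t))$ with $Q(t)=-h/2$, and the midpoint of $(u\pm t,Q(u\pm t))$ with $Q(t)=h$. Because $Q(u)=Q(-u)$, such a $Q$ forces $\dim U>\dim W$, so you cannot work with $U=W=\F_q$.

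The paper's realization is as follows.
\begin{itemize}
\item Take $T=\F_{p^k}\subseteq E=\F_{p^{kL}}$ with $L$ odd, a nonsquare $\delta\in E$, and $Q(x,s)=x^2-\delta s^2$ on $E\oplus T$. Anisotropy is immediate.
\item Surjectivity comes from the Weil bound applied to $s\mapsto\chi_T(N_{E/T}(h+\delta s^2))$. Its polynomial has roots of odd multiplicity $L/d$ precisely because $L$ is odd, and the bound suffices once $p^k>(2L-1)^2$.
\item This gives a set of size $p^{n_0/2}p^{k/2}$ in dimension $n_0=k(2L+1)$.
\end{itemize}
The factor $n^{1+\varepsilon}$ then has a specific source. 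One chooses $k$ minimal with $p^k>(n/k)^2$, so that $p^{k/2}\le\sqrt p\,n/(k-1)$. The remaining $r<4k$ coordinates are padded by strong induction, using $(4k)^{1+\varepsilon}\sqrt p\,n/(k-1)=o(n^{1+\varepsilon})$. It does not come from products along a flag, and your sketch gives no actual mechanism for that step.

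\textbf{The padding step also needs more than you assume.} The product works only because the large factor is endpoint--midpoint complete. A merely complete $(2,-1)$-avoiding set times a complete $3$-AP-free set fails at points whose second coordinate is only midpoint-covered. Your mixture of $B\times\{0\}$ with sets $B'\times\{y\}$ is not a product at all, and it may create new progressions.
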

	
	%The cyclic theorem answers the square-root question with the explicit absolute constant $2$, and suggests that flexible interval-saturating blocks, rather than fixed-base digit blocks alone, are the right objects to study. The earlier cyclic target constant $3/2$ remains open. The vector-space theorem is of a different nature: it uses algebraic graphs over extension fields to obtain a near-square-root bound in every dimension over a fixed odd prime field.
	
	We briefly outline the proofs.  To show Theorem \ref{thm:main}, we first record the implication from complete $(2,-1)$-avoidance to complete $3$-AP-freeness.  Second, we introduce $R$-complete sets (in \cref{sec: blocks}): integer sets contained in $[0,R]$ that are complete $(2,-1)$-avoiding in the whole interval $[-R,2R]\subset \Z$.  Such a sets descends to a complete $(2,-1)$-avoiding set in $\Z_m$ whenever $2R<m\le 3R+1$.  This enables us to generalize the former base-$4$  construction, and allows the application of several moduli, provided $m\ge 81$ to complete the proof of theorem \cref{thm:main} in Section \ref{sec:binary}.  A finite check handles the small cases $m<81$.  Finally,  Section \ref{sec:vector-spaces} proves \cref{thm:vector-main}.
	Graphs of quadratic functions 
	give complete  $(2,-1)$-avoiding sets in many special dimensions.  These special sets can be multiplied by complete $3$-AP-free sets in the remaining coordinates, to  obtain a general upper  bound over each fixed odd prime field in any dimension.

	\section{Complete $(2,-1)$-avoidance and complete $3$-AP-freeness}
	
	We recall the connection between  complete $(2,-1)$-avoiding and complete $3$-AP-free sets, that will be used throughout the paper, cf. \cite{CsajbokNagy2025}.
	
	\begin{proposition}\label{prop:relation}
		Let $G$ be an abelian group and $A\subseteq G$.
		\begin{enumerate}[label=\textup{(\roman*)}]
			\item If $A$ is $(2,-1)$-avoiding, then $A$ is $3$-AP-free.
			\item If $A$ is complete $(2,-1)$-avoiding, then $A$ is complete $3$-AP-free.
			\item If $G$ has odd order, then $(2,-1)$-avoidance is equivalent to the usual avoidance of non-trivial $3$-APs.
		\end{enumerate}
	\end{proposition}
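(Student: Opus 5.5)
The plan is to unwind the definitions and check the three claims directly. A $3$-AP is a set $\{g,g+d,g+2d\}$ of three distinct elements. Such a set lies inside $A$ exactly when there are distinct $a,b,c\in A$ with $c=2a-b$: take $a=g+d$, $b=g$, $c=g+2d$. Conversely, given $c=2a-b$ with $a,b,c$ distinct, put $g=b$ and $d=a-b$.

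For (i), I would argue by contraposition. If $A$ contains a $3$-AP, write its middle term as $a$ and one endpoint as $b$. The other endpoint $c=2a-b$ then lies in $A\cap(2A\dotminus A)$, because $a\ne b$.

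For (ii), part (i) already gives that $A$ is $3$-AP-free, so only maximality is left. Take $x\in G\setminus A$. By completeness $x\in 2A\dotminus A$, so $x=2a-b$ with $a\ne b$ in $A$. The three elements $b,a,x$ are distinct: $a\ne b$ by assumption, and $x\ne a,b$ because $x\notin A$. So $\{b,a,2a-b\}$ is a $3$-AP, and $A\cup\{x\}$ contains it. Hence no outside point can be added, and $A$ is complete $3$-AP-free.

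For (iii), one direction is (i). For the converse, assume $|G|$ is odd and $A$ is $3$-AP-free. Suppose $c\in A\cap(2A\dotminus A)$, so $c=2a-b$ with $a\ne b$ in $A$. We need $c\ne a$ and $c\ne b$.
\begin{itemize}
\item If $c=a$, then $a=b$, which is excluded.
\item If $c=b$, then $2a=2b$, so $2(a-b)=0$. Since $G$ has odd order it has no element of order $2$, which forces $a=b$, again excluded.
\end{itemize}
Thus $a,b,c$ are distinct, and $\{b,a,c\}$ is a $3$-AP in $A$, a contradiction.

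This is also the only step with any content. In even order one can have $2a=2b$ with $a\ne b$; then $2a-b=b$ lies in $A$ without producing a genuine progression, which is why the oddness hypothesis is needed.
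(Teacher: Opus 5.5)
Your proof is correct and follows essentially the same route as the paper. Parts (i) and (ii) read a relation $c=2a-b$ with $a\ne b$ as the progression $b,a,c$, and part (iii) reduces to ruling out $c=b$, using that doubling is injective in a group of odd order. Your explicit treatment of the case $c=a$, and of the distinctness of $b,a,x$ in (ii), only spells out steps the paper leaves implicit.
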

	
	\begin{proof}
		If $A$ contained a $3$-AP $b,a,c$ with distinct elements, then $c=2a-b$ with $a,b\in A$ and $a\ne b$, contradicting $(2,-1)$-avoidance.  This proves (i).
		
		For (ii), let $x\in G\setminus A$.  Since $A$ is complete $(2,-1)$-avoiding, there exist $a,b\in A$, $a\ne b$, such that $x=2a-b$.  Then $b,a,x$ form a $3$-term arithmetic progression.  Since $x\notin A$ and $a\ne b$, adjoining $x$ to $A$ creates a $3$-AP.  Hence $A$ is complete $3$-AP-free.
		
		For (iii), the only possible discrepancy between a relation $c=2a-b$ with $a\ne b$ and a relation involving three distinct elements occurs when $c=b$.  In that case $2(a-b)=0$.  If $G$ has odd order, multiplication by $2$ is injective, so $a=b$, a contradiction.  Thus every non-trivial relation $c=2a-b$ has three distinct elements.
	\end{proof}

	\subsection{Small moduli and  $(2,-1)$ completion}\label{sec:small-moduli}
	
	The main invariant in this paper is $a(3\text{-}\AP,G)$.  The auxiliary invariant $\aavo(G)$ is useful because, by Proposition~\ref{prop:relation},
	\[
	a(3\text{-}\AP,G)\le \aavo(G)
	\]
	whenever $\aavo(G)$ is finite.  For the finite range not covered by the binary construction of Section \ref{sec:cyclic}, we used an exhaustive verification to determine the values of     $a(3\text{-}\AP,\Z_m)$ and $\aavo(\Z_m).$  The following table records
	\[
	f(m)=a(3\text{-}\AP,\Z_m),
	\qquad
	g(m)=\aavo(\Z_m),
	\]
	for $m\le80$, with $g(m)=\infty$ if no complete $(2,-1)$-avoiding subset of $\Z_m$ exists.

	Explicit attaining sets are listed in \cref{app:small-table}. Exactness of the displayed values, and the non-existence entries in the
	\(g(m)\)-column, were verified by exhaustive enumeration.
	
	% \begin{center}
		% {\scriptsize
			% \setlength{\fboxsep}{4pt}
			% \renewcommand{\arraystretch}{0.92}
			% \begin{tabular}{@{}c@{\hspace{0.8em}}c@{\hspace{0.8em}}c@{\hspace{0.8em}}c@{}}
				% \fbox{\begin{tabular}{c|c|c}
						% $m$ & $f(m)$ & $g(m)$\\ \hline
						% 1&1&1\\
						% 2&2&$\infty$\\
						% 3&2&2\\
						% 4&2&2\\
						% 5&2&$\infty$\\
						% 6&4&$\infty$\\
						% 7&3&3\\
						% 8&4&$\infty$\\
						% 9&4&4\\
						% 10&4&4\\
						% 11&4&4\\
						% 12&4&4\\
						% 13&4&4\\
						% 14&4&4\\
						% 15&4&4\\
						% 16&4&4\\
						% 17&4&$\infty$\\
						% 18&4&$\infty$\\
						% 19&5&$\infty$\\
						% 20&4&$\infty$
						% \end{tabular}}
				% &
				% \fbox{\begin{tabular}{c|c|c}
						% $m$ & $f(m)$ & $g(m)$\\ \hline
						% 21&5&5\\
						% 22&6&6\\
						% 23&6&6\\
						% 24&6&6\\
						% 25&6&6\\
						% 26&6&6\\
						% 27&6&7\\
						% 28&6&6\\
						% 29&6&7\\
						% 30&6&7\\
						% 31&6&7\\
						% 32&6&8\\
						% 33&6&8\\
						% 34&7&7\\
						% 35&6&8\\
						% 36&7&8\\
						% 37&7&8\\
						% 38&7&8\\
						% 39&6&8\\
						% 40&8&8
						% \end{tabular}}
				% &
				% \fbox{\begin{tabular}{c|c|c}
						% $m$ & $f(m)$ & $g(m)$\\ \hline
						% 41&8&8\\
						% 42&8&8\\
						% 43&8&8\\
						% 44&8&8\\
						% 45&8&8\\
						% 46&8&8\\
						% 47&8&8\\
						% 48&8&8\\
						% 49&8&8\\
						% 50&8&8\\
						% 51&8&8\\
						% 52&8&8\\
						% 53&8&8\\
						% 54&8&8\\
						% 55&8&8\\
						% 56&8&8\\
						% 57&8&8\\
						% 58&8&8\\
						% 59&8&8\\
						% 60&8&8
						% \end{tabular}}
				% &
				% \fbox{\begin{tabular}{c|c|c}
						% $m$ & $f(m)$ & $g(m)$\\ \hline
						% 61&8&8\\
						% 62&8&8\\
						% 63&8&8\\
						% 64&8&8\\
						% 65&9&10\\
						% 66&10&10\\
						% 67&10&11\\
						% 68&10&10\\
						% 69&10&10\\
						% 70&10&11\\
						% 71&10&10\\
						% 72&10&11\\
						% 73&10&10\\
						% 74&11&11\\
						% 75&10&11\\
						% 76&10&12\\
						% 77&11&12\\
						% 78&11&11\\
						% 79&11&12\\
						% 80&10&12
						% \end{tabular}}
				% \end{tabular}
			% }
		% \end{center}
	
	\begin{center}
		{\scriptsize
			\setlength{\fboxsep}{4pt}
			\renewcommand{\arraystretch}{0.92}
			\begin{tabular}{@{}c@{\hspace{0.8em}}c@{\hspace{0.8em}}c@{\hspace{0.8em}}c@{}}
				\fbox{\begin{tabular}{c|c|c}
						$m$ & $f(m)$ & $g(m)$\\ \hline
						1&1&1\\
						2&2&$\infty$\\
						3&2&2\\
						4&2&2\\
						5&2&$\infty$\\
						6&4&$\infty$\\
						7&3&3\\
						8&4&$\infty$\\
						9&4&4\\
						10&4&4\\
						11&4&4\\
						12&4&4\\
						13&4&4\\
						14&4&4\\
						15&4&4\\
						16&4&4\\
						17&4&$\infty$\\
						18&4&$\infty$\\
						19&5&$\infty$\\
						20&4&$\infty$
				\end{tabular}}
				&
				\fbox{\begin{tabular}{c|c|c}
						$m$ & $f(m)$ & $g(m)$\\ \hline
						21&5&5\\
						22&6&6\\
						23&6&6\\
						24&6&6\\
						25&6&6\\
						26&6&6\\
						27&6&7\\
						28&6&6\\
						29&6&7\\
						30&6&7\\
						31&6&7\\
						32&6&8\\
						33&6&8\\
						34&7&7\\
						35&6&8\\
						36&7&8\\
						37&7&8\\
						38&7&8\\
						39&6&8\\
						40&8&8
				\end{tabular}}
				&
				\fbox{\begin{tabular}{c|c|c}
						$m$ & $f(m)$ & $g(m)$\\ \hline
						41&7&8\\
						42&7&8\\
						43&7&8\\
						44&7&8\\
						45&8&8\\
						46&8&8\\
						47&8&8\\
						48&8&8\\
						49&8&8\\
						50&8&8\\
						51&8&8\\
						52&8&8\\
						53&8&8\\
						54&8&8\\
						55&8&8\\
						56&8&8\\
						57&8&8\\
						58&8&8\\
						59&8&8\\
						60&8&8
				\end{tabular}}
				&
				\fbox{\begin{tabular}{c|c|c}
						$m$ & $f(m)$ & $g(m)$\\ \hline
						61&8&8\\
						62&8&8\\
						63&8&8\\
						64&8&8\\
						65&8&10\\
						66&8&10\\
						67&8&10\\
						68&8&10\\
						69&9&10\\
						70&8&10\\
						71&10&10\\
						72&8&10\\
						73&9&9\\
						74&10&11\\
						75&10&11\\
						76&10&10\\
						77&10&11\\
						78&10&11\\
						79&10&12\\
						80&10&11
				\end{tabular}}
			\end{tabular}
		}
	\end{center}
	
	Thus complete $3$-AP-free sets of size less than $2\sqrt m$ exist for every $m<81$.  Complete $(2,-1)$-avoiding sets may fail to exist for small moduli; among $m\le80$, the non-existence cases displayed here are
	\[
	m\in\{2,5,6,8,17,18,19,20\}.
	\]
	The construction in \cref{sec:binary} covers all $m\ge81$.
	\section{Complete $3$-AP-free sets in cyclic groups} \label{sec:cyclic}
	
	\subsection{$R$-complete sets and reduction modulo $m$}\label{sec: blocks}
	
	All intervals in this section are intervals of integers.  If $X,Y\subseteq\Z$, then
	\[
	2X-Y=\{2x-y:\ x\in X,\ y\in Y\},
	\]
	and
	\[
	2X\dotminus Y=\{2x-y:\ x\in X,\ y\in Y,\ x\ne y\}.
	\]
	When $X=Y$ we write $2X\dotminus X$.
	
	\begin{definition}\label{def:block}
		%Let $N\equiv1\pmod 3$ and put $R=(N-1)/3$. 
		A set $A\subseteq[0,R]$ is called an \emph{$R$-complete set} if
		\[
		A\cap(2A\dotminus A)=\emptyset
		\quad\text{and}\quad
		A\cup(2A\dotminus A)=[-R,2R].
		\]
		%Its weight is $\wt(A)=|A|$.
	\end{definition}
	
	The definition captures exactly what is needed for the modular construction: a small-diameter set which is internally $(2,-1)$-avoiding and which saturates the whole interval determined by its diameter.
	
	\begin{lemma}\label{lem:modreduction}
		Let $A\subseteq[0,R]$ be an $R$-complete set.  If
		$  2R<m\le 3R+1$,
		then the image of $A$ in $\Z_m$ is complete $(2,-1)$-avoiding.
	\end{lemma}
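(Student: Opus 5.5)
Let $\pi:\Z\to\Z_m$ be the reduction map and write $\bar A=\pi(A)$. I will check three things in turn: that $\pi$ is injective on $A$, that $\bar A$ is $(2,-1)$-avoiding, and that $\bar A$ is complete. Throughout I will only use two facts: $A\subseteq[0,R]$, and the hypothesis $2R<m\le 3R+1$.

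Injectivity is immediate. Distinct $a,a'\in A$ satisfy $0<|a-a'|\le R<m$, so their images are distinct. Hence $|\bar A|=|A|$, and distinctness of $a,b\in A$ is the same as distinctness of $\pi(a),\pi(b)$.

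For the avoiding property, suppose $\pi(c)=2\pi(a)-\pi(b)$ with $a,b,c\in A$ and $a\ne b$. Then $c\equiv 2a-b\pmod m$, and the integer $2a-b-c$ lies in $[-2R,2R]$, since $2a-b\in[-R,2R]$ and $c\in[0,R]$. Because $m>2R$, the only multiple of $m$ in this interval is $0$. So $c=2a-b$ holds in $\Z$, which puts $c\in A\cap(2A\dotminus A)$ and contradicts $R$-completeness.

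For completeness, take any residue $x\in\Z_m$. The integer interval $[-R,2R]$ consists of $3R+1\ge m$ consecutive integers, so it contains a representative $\tilde x$ of $x$. By $R$-completeness, either $\tilde x\in A$, or $\tilde x=2a-b$ with $a,b\in A$ and $a\ne b$. In the second case $x=2\pi(a)-\pi(b)$, and $\pi(a)\ne\pi(b)$ by injectivity. Hence $\Z_m=\bar A\cup(2\bar A\dotminus\bar A)$.

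None of these steps is a real obstacle. The only point needing care is the bookkeeping that matches each half of the hypothesis to its use. The lower bound $m>2R$ is used twice: for injectivity, and to force a congruence between numbers of size at most $2R$ to be an equality. The upper bound $m\le 3R+1$ is used only to guarantee that every residue has a representative in $[-R,2R]$.
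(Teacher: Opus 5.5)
Your proof is correct and follows essentially the same route as the paper. Injectivity comes from $R<m$. Avoidance comes from the fact that $2a-b-c\in[-2R,2R]$ forces an integer identity when $m>2R$. Saturation comes from lifting residues to representatives in $[-R,2R]$ using $m\le 3R+1$. The one small difference is in saturation: the paper lifts only residues outside the image of $A$, via $r$ or $r-m$, and checks the lift is not in $A$, whereas your pigeonhole on $3R+1\ge m$ consecutive integers plus $A\cup(2A\dotminus A)=[-R,2R]$ makes that check unnecessary.
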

	
	\begin{proof}
		Since $A\subseteq[0,R]$ and $R<m$, reduction modulo $m$ is injective on $A$.
		
		First prove avoidance.  Suppose that
		$  c\equiv 2a-b \pmod m$
		for $a,b,c\in A$.  Then $2a-b-c$ is a multiple of $m$.  But $a,b,c\in[0,R]$, so
		$$ -2R\le 2a-b-c\le2R.$$
		Since $m>2R$, the only multiple of $m$ in this interval is $0$.  Thus $2a-b-c=0$ over the integers, and the $R$-complete property gives $a=b=c$.
		
		Now prove saturation.  Let $r\in\Z_m\setminus A$ and choose its representative $0\le r\le m-1$.  If $r\le2R$, set $y=r$.  If $r>2R$, set $y=r-m$.  In the second case $y\le -1$, and since $m\le3R+1$,
		\[
		y=r-m\ge2R+1-m\ge -R.
		\]
		Thus $y\in[-R,2R]$ and $y\equiv r\pmod m$.  Moreover $y\notin A$: in the first case this follows from the choice of $r$, and in the second case $y<0$ whereas $A\subseteq[0,R]$.  By the $R$-complete property, $y=2a-b$ for some distinct $a,b\in A$.  Hence $r\equiv2a-b\pmod m$.
	\end{proof}
	
	% Equivalently, since $R=(N-1)/3$, the admissible interval for $m$ is
	% \[
	%         \frac{2(N-1)}3<m\le N.
	% \]
	
	\subsection{$R$-complete sets}\label{sec:binary}
	
	\begin{lemma}[Binary $R$-complete sets]\label{lem:binaryinterval}
		Let $n\ge1$.  Let $d_1=1$, put $S_i=d_1+\cdots+d_i$, and suppose that
		\[
		2S_i+1\le d_{i+1}\le 3S_i+1
		\qquad (i=1,\ldots,n-1).
		\]
		Then
		\[
		P=\left\{\sum_{i=1}^n \varepsilon_i d_i:
		\varepsilon_i\in\{0,1\}\right\}
		\]
		is an $S_n$-complete set.  Moreover $|P|=2^n$.
	\end{lemma}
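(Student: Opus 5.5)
Write $P_k=\{\sum_{i\le k}\varepsilon_i d_i\}$ and argue by induction on $k$ that $P_k$ is an $S_k$-complete set with $|P_k|=2^k$. The base case $k=1$ gives $P_1=\{0,1\}$. Here $2P_1\dotminus P_1=\{-1,2\}$, which is disjoint from $P_1$, and $P_1\cup\{-1,2\}=[-1,2]=[-S_1,2S_1]$.

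For the step, write $d=d_{k+1}$, $S=S_k$, $P=P_k$, so that $P_{k+1}=P\cup(P+d)$. Since $d\ge 2S+1>S$, the two copies $P\subseteq[0,S]$ and $P+d\subseteq[d,d+S]$ are disjoint. This gives $|P_{k+1}|=2^{k+1}$ and $P_{k+1}\subseteq[0,S+d]=[0,S_{k+1}]$.

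The set $2P_{k+1}-P_{k+1}$ splits into four pieces according to which copy each of $a$ and $b$ comes from:
\begin{itemize}
\item $a,b\in P$ or $a,b\in P+d$ gives $(2P\dotminus P)+\{0,d\}$;
\item $a\in P+d,\ b\in P$ gives $2P-P+2d$;
\item $a\in P,\ b\in P+d$ gives $2P-P-d$.
\end{itemize}
Note that $2P-P=(2P\dotminus P)\cup P=[-S,2S]$, by the induction hypothesis together with the trivial case $a=b$.

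\textbf{Avoidance.} Suppose $c=2a-b$ with $a\ne b$ and all three in $P_{k+1}$. The same-copy pieces lie in $(2P\dotminus P)\cup(2P\dotminus P+d)$, which is disjoint from $P\cup(P+d)$. Indeed, $2P\dotminus P\subseteq[-S,2S]\setminus P$, the copy $P+d$ starts at $d\ge 2S+1$, and symmetrically after translating by $d$ the set $P$ ends at $S<d-S$. So no relation arises there.

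The cross piece $2P-P+2d$ lies in $[2d-S,2d+2S]$. Its left end satisfies $2d-S>d+S$ exactly when $d>2S$, which holds since $d\ge 2S+1$. So it misses $P_{k+1}$.

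The cross piece $2P-P-d$ lies in $[-d-S,2S-d]$. Its right end satisfies $2S-d<0$, again because $d\ge 2S+1$. So it also misses $P_{k+1}$.

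\textbf{Saturation.} The pieces cover the following intervals:
\begin{itemize}
\item $[-S,2S]$ from the first same-copy piece (together with $P$);
\item $[d-S,d+2S]$ from the translated same-copy piece;
\item $[2d-S,2d+2S]$ from the first cross piece;
\item $[-d-S,2S-d]$ from the second cross piece.
\end{itemize}
The target is $[-(S+d),2(S+d)]=[-d-S,2d+2S]$, and the gaps between consecutive intervals must close. Listing the interval endpoints in increasing order:
\begin{itemize}
\item $[-d-S,2S-d]$ meets $[-S,\dots]$ since $2S-d+1\ge -S$, i.e.\ $d\le 3S+1$;
\item $[\dots,2S]$ meets $[d-S,\dots]$ since $d-S\le 2S+1$, i.e.\ $d\le 3S+1$;
\item $[\dots,d+2S]$ meets $[2d-S,\dots]$ since $2d-S\le d+2S+1$, i.e.\ $d\le 3S+1$.
\end{itemize}
Hence the union is exactly the whole target interval.

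The main point to get right is this bookkeeping: the lower bound on $d$ gives avoidance and the upper bound gives saturation. The one subtlety is to ensure that points covered only via $a=b$ are actually points of $P_{k+1}$ rather than points that need covering. Coverage of $[-S,2S]$ by $P\cup(2P\dotminus P)$ handles this for the same-copy pieces, and cross pieces automatically have $a\ne b$.
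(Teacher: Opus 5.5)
Your proof is correct and follows essentially the same route as the paper: the saturation step uses exactly the same four translates $-d+[-S,2S]$, $[-S,2S]$, $d+[-S,2S]$, $2d+[-S,2S]$, which are made consecutive by $d\le 3S+1$. The only difference is cosmetic: you fold distinctness and avoidance into the same induction by separating intervals using $d>2S$, whereas the paper argues them directly from the largest index where the digit vectors differ, using $d_j>S_{j-1}$ and $d_j>2S_{j-1}$; this is the same mechanism, unrolled.
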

	
	\begin{proof}
		First, the subset sums are distinct.  Indeed, if
		\[
		\sum_{i=1}^n \delta_i d_i=0,
		\qquad \delta_i\in\{-1,0,1\},
		\]
		and not all $\delta_i$ are zero, let $j$ be the largest index with $\delta_j\ne0$.  Then the contribution of the lower indices has absolute value at most $S_{j-1}$, while $d_j>S_{j-1}$, a contradiction.  Hence $|P|=2^n$.
		
		Now prove avoidance.  Suppose $c=2a-b$ with $a,b,c\in P$.  Write
		\[
		a=\sum \alpha_i d_i,\qquad
		b=\sum \beta_i d_i,\qquad
		c=\sum \gamma_i d_i,
		\qquad
		\alpha_i,\beta_i,\gamma_i\in\{0,1\}.
		\]
		Then
		\[
		0=\sum_{i=1}^n (2\alpha_i-\beta_i-\gamma_i)d_i .
		\]
		If not all triples $(\alpha_i,\beta_i,\gamma_i)$ are equal, let $j$ be the largest index for which they are not equal.  Then
		\[
		2\alpha_j-\beta_j-\gamma_j\in\{-2,-1,1,2\}.
		\]
		The lower-index contribution has absolute value at most $2S_{j-1}$, while $d_j>2S_{j-1}$.  Thus cancellation is impossible.  Hence all triples are equal, and so $a=b=c$.
		
		It remains to prove saturation.  We prove by induction on $i$ that every integer in $[-S_i,2S_i]$ has a representation
		\[
		\sum_{h=1}^i \eta_h d_h,
		\qquad
		\eta_h\in\{-1,0,1,2\}.
		\]
		For $i=1$ this is immediate.  Suppose the claim holds for $S_i$, and write $d=d_{i+1}$.  The four possible new digit values give the intervals
		\[
		-d+[-S_i,2S_i],\quad
		[-S_i,2S_i],\quad
		d+[-S_i,2S_i],\quad
		2d+[-S_i,2S_i].
		\]
		They are consecutive because $d\le3S_i+1$.  Their union is therefore
		\[
		[-(S_i+d),\,2(S_i+d)].
		\]
		Thus every element of $[-S_n,2S_n]$ is of the form $2p-q$ with $p,q\in P$.  If the element is not in $P$, then necessarily $p\ne q$.  Hence $P$ is an $S_n$-complete set.
	\end{proof}
	
	\begin{remark}\label{rem:basefour}
		The earlier base-$4$ construction in \cite{CsajbokNagy2025} of the authors  is recovered as the extreme special case of \cref{lem:binaryinterval}. Indeed, choose
		$d_i=4^{i-1}$, $i=1,\ldots,n$.
		Then
		$ S_i=1+4+\cdots+4^{i-1}=\frac{4^i-1}{3},$
		and the admissibility condition in \cref{lem:binaryinterval} is satisfied with equality at the upper end:
		$ d_{i+1}=4^i=3S_i+1.$
		The resulting subset-sum  set is precisely
		\[
		B_n=\left\{\sum_{i=0}^{n-1}\varepsilon_i4^i:
		\varepsilon_i\in\{0,1\}\right\}.
		\]
		
		Thus the former $(4^n-1)/3$-complete  set is not a separate construction, but the right-endpoint case of the more flexible binary interval construction.  The new feature of \cref{lem:binaryinterval} is that the next digit $d_{i+1}$ may be chosen anywhere in the interval
		$ [2S_i+1,  3S_i+1],$
		which fills all intermediate moduli $3R+1$ with
		$R\in  [ \frac{3^n-1}{2}, \frac{4^n-1}{3}]$.
	\end{remark}
	
	\begin{corollary}\label{cor:manybinaryseeds}
		For every $n\ge1$ and every integer $R$ satisfying
		\[
		\frac{3^n-1}{2}\le R\le \frac{4^n-1}{3},
		\]
		there exists an $R$-complete set  of size $2^n$.
	\end{corollary}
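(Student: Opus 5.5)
The corollary will follow from \cref{lem:binaryinterval} once we show that every integer $R$ in the range can occur as $S_n$ for an admissible digit sequence. So the plan is to prove, by induction on $n$, that the set $\mathcal S_n$ of attainable values of $S_n$ (over all admissible sequences $d_1=1,d_2,\dots,d_n$) contains the full integer interval $I_n=\bigl[\tfrac{3^n-1}{2},\tfrac{4^n-1}{3}\bigr]$. \cref{lem:binaryinterval} then supplies an $S_n$-complete set $P$ with $|P|=2^n$.

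For $n=1$ we have $I_1=\{1\}$ and $S_1=d_1=1$, so the base case holds. For the inductive step, an admissible sequence of length $n$ with sum $S$ extends by any $d_{n+1}\in[2S+1,3S+1]$. This gives $S_{n+1}=S+d_{n+1}$, so the new sums fill the interval $J(S)=[3S+1,4S+1]$. Hence $\mathcal S_{n+1}\supseteq\bigcup_{S\in I_n}J(S)$.

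The only point needing care is that these intervals overlap or abut, so that the union has no gaps. Consecutive values $S$ and $S+1$ give $J(S)=[3S+1,4S+1]$ and $J(S+1)=[3S+4,4S+5]$. These are consecutive as soon as $4S+1\ge 3S+3$, that is, for $S\ge2$. The case $S=1$ arises only when $n=1$, where $I_1$ is a single point and no union of several intervals is needed. Therefore the union equals $\bigl[3\min I_n+1,\,4\max I_n+1\bigr]$.

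It remains to check the endpoints:
\[
3\cdot\frac{3^n-1}{2}+1=\frac{3^{n+1}-1}{2},
\qquad
4\cdot\frac{4^n-1}{3}+1=\frac{4^{n+1}-1}{3}.
\]
These are exactly the endpoints of $I_{n+1}$, which completes the induction. Both endpoint values are integers, so $I_n$ is a genuine nonempty integer interval (we have $3^n/2\le 4^n/3$ for all $n\ge1$). I do not expect a real obstacle here. The only subtle step is the no-gap check, which reduces to the inequality $S\ge2$.
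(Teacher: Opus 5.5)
Your proposal is correct and follows the paper's proof essentially verbatim: the same induction on the set of attainable sums $S_n$, the same intervals $[3S+1,4S+1]$, the same no-gap condition $S\ge2$ and the same endpoint computation. You only prove the containment $\mathcal S_{n+1}\supseteq I_{n+1}$ rather than the paper's equality, which is all that is needed. One harmless slip: your parenthetical inequality $3^n/2\le 4^n/3$ is false for $n=1$, but nonemptiness of $I_n$ plays no role in the argument (and $I_1=\{1\}$ anyway).
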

	
	\begin{proof}
		Let $\mathcal S_n$ denote the set of all integers $S_n$ that can occur from a sequence $d_1,\ldots,d_n$ satisfying $d_1=1$, $S_i=d_1+\cdots+d_i$, and
		\[
		2S_i+1\le d_{i+1}\le 3S_i+1
		\qquad (i=1,\ldots,n-1).
		\]
		We prove by induction that
		\[
		\mathcal S_n=
		\left[
		\frac{3^n-1}{2},
		\frac{4^n-1}{3}
		\right]\cap\Z .
		\]
		
		For $n=1$, we have $d_1=1$, hence $S_1=1$, and indeed
		$\frac{3^1-1}{2}=\frac{4^1-1}{3}=1.$
		
		Assume the claim holds for $n$.  Fix $S\in\mathcal S_n$.  The next digit $d_{n+1}$ may be chosen arbitrarily in the integer interval
		\[
		2S+1\le d_{n+1}\le 3S+1.
		\]
		Therefore the next sum
		$ S_{n+1}=S+d_{n+1}$
		may be any integer in
		$ [3S+1,4S+1].$
		
		Thus
		\[
		\mathcal S_{n+1}
		=
		\bigcup_{S\in\mathcal S_n}
		[3S+1,4S+1]\cap\Z .
		\]
		
		By the induction hypothesis, $S$ runs through the consecutive interval
		\[
		\left[
		\frac{3^n-1}{2},
		\frac{4^n-1}{3}
		\right]\cap\Z .
		\]
		For consecutive values $S$ and $S+1$, the corresponding intervals are
		\[
		[3S+1,4S+1]
		\quad\text{and}\quad
		[3S+4,4S+5].
		\]
		These intervals have no gap whenever
		\[
		3S+4\le4S+2,
		\]
		that is, whenever $S\ge2$.  In the first induction step there is only the single value $S=1$, and after that the lower endpoint is at least $4$.  Hence the union is one consecutive integer interval.
		
		Its lower endpoint is
		\[
		3\cdot\frac{3^n-1}{2}+1
		=
		\frac{3^{n+1}-1}{2},
		\]
		and its upper endpoint is
		\[
		4\cdot\frac{4^n-1}{3}+1
		=
		\frac{4^{n+1}-1}{3}.
		\]
		Thus
		\[
		\mathcal S_{n+1}
		=
		\left[
		\frac{3^{n+1}-1}{2},
		\frac{4^{n+1}-1}{3}
		\right]\cap\Z .
		\]
		This completes the induction.
		
		Now let $R$ be any integer in the stated interval.  Then $R\in\mathcal S_n$, so there are integers $d_1,\ldots,d_n$ satisfying the hypotheses of \cref{lem:binaryinterval} and with $S_n=R$.  By \cref{lem:binaryinterval}, the set
		\[
		P=\left\{\sum_{i=1}^n \varepsilon_i d_i:
		\varepsilon_i\in\{0,1\}\right\}
		\]
		is an $R$-complete set  and has size  $|P|=2^n$.
	\end{proof}
	
	\begin{theorem}\label{thm:threshold81}
		For every integer $m\ge81$, there exists a complete $(2,-1)$-avoiding set $A\subseteq\Z_m$ such that
		\[
		|A|<2\sqrt m .
		\]
		Consequently,
		\[
		a(3\text{-}\AP,\Z_m)
		\le
		\aavo(\Z_m)
		<
		2\sqrt m
		\qquad (m\ge81).
		\]
	\end{theorem}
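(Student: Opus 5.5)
The plan is to combine \cref{cor:manybinaryseeds} with \cref{lem:modreduction}. For a fixed $n\ge1$, the corollary gives an $R$-complete set of size $2^n$ for every integer $R\in\bigl[\frac{3^n-1}{2},\frac{4^n-1}{3}\bigr]$. \cref{lem:modreduction} then turns each such set into a complete $(2,-1)$-avoiding set of the same size in $\Z_m$ for every $m$ with $2R<m\le 3R+1$.

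The first step is to show that, as $R$ runs over this interval, the ranges $(2R,3R+1]$ cover a whole interval of moduli without gaps. For consecutive values $R$ and $R+1$ the ranges are $(2R,3R+1]$ and $(2R+2,3R+4]$. They leave no gap as soon as $2R+2\le 3R+1$, which holds for all $R\ge1$. Evaluating at the two endpoint values of $R$, the covered set is
\[
\bigl(3^n-1,\ 4^n\bigr]\cap\Z=[3^n,4^n]\cap\Z .
\]
Hence every $m\in[3^n,4^n]$ admits a complete $(2,-1)$-avoiding set of size $2^n$.

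Next, given $m\ge81$, I choose $n$ minimal with $4^n\ge m$, so that $4^{n-1}<m\le 4^n$. The size bound is then immediate:
\[
2^n=2\sqrt{4^{n-1}}<2\sqrt m .
\]
It remains to check that $m\ge 3^n$, so that $m$ lies in the covered range. This is the only delicate step, and it is exactly where the threshold $81$ enters.

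Since $m\ge81>64$, we have $n\ge4$. If $n=4$, then $m\ge81=3^4$ by hypothesis. If $n\ge5$, then $3^n\le 4^{n-1}$: this holds for $n=5$ because $243\le256$, and it propagates inductively since each step multiplies the left side by $3$ and the right side by $4$. Therefore $m>4^{n-1}\ge 3^n$.

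In both cases $m\in[3^n,4^n]$, and we obtain a complete $(2,-1)$-avoiding set $A\subseteq\Z_m$ with $|A|=2^n<2\sqrt m$. The consequence $a(3\text{-}\AP,\Z_m)\le\aavo(\Z_m)$ follows from \cref{prop:relation}(ii).
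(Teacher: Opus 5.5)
Your proof is correct and is essentially the paper's own argument. You cover $[3^n,4^n]$ by the ranges $2R<m\le 3R+1$ obtained from \cref{cor:manybinaryseeds} and \cref{lem:modreduction}, take $4^{n-1}<m\le 4^n$ to get $2^n<2\sqrt m$, and use $m\ge 81$ when $n=4$ and $3^n\le 4^{n-1}$ when $n\ge 5$ to place $m$ in the covered range; the only differences (half-open intervals in the gap check, the order of the steps) are cosmetic.
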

	
	\begin{proof}
		Let $m\ge81$.  Choose $n\ge4$ such that
		$ 4^{n-1}<m\le4^n.$
		We first note that
		$3^n\le m\le4^n.$
		Indeed, for $n=4$ this follows from $m\ge81=3^4$.  For $n\ge5$, we have
		$3^n\le4^{n-1}<m,$ so again $3^n\le m$.
		
		By \cref{cor:manybinaryseeds}, for every integer
		\[
		R\in
		\left[
		\frac{3^n-1}{2},
		\frac{4^n-1}{3}
		\right]
		\]
		there exists an $R$-complete set of size $2^n$.
		
		Now consider the intervals
		\[
		[2R+1,3R+1]
		\]
		as $R$ runs through
		\[
		\left[
		\frac{3^n-1}{2},
		\frac{4^n-1}{3}
		\right]\cap\Z .
		\]
		For consecutive values $R$ and $R+1$, these intervals are
		\[
		[2R+1,3R+1]
		\quad\text{and}\quad
		[2R+3,3R+4].
		\]
		They have no gap because
		$ 2R+3\le3R+2$ 
		for every $R\ge1$.  Therefore their union is a single integer interval.  Its lower endpoint is
		\[
		2\cdot\frac{3^n-1}{2}+1=3^n,
		\]
		and its upper endpoint is
		\[
		3\cdot\frac{4^n-1}{3}+1=4^n.
		\]
		Hence the intervals $[2R+1,3R+1]$ cover all integers in
		$ [3^n,4^n].$
		
		Since $m\in[3^n,4^n]$, we may choose an integer $R$ in the above range such that
		\[
		2R+1\le m\le3R+1.
		\]
		Equivalently, $2R<m\le3R+1$.  Let $P\subseteq[0,R]$ be the $R$-complete set of size $2^n$ given by \cref{cor:manybinaryseeds}.  By \cref{lem:modreduction}, the image of $P$ in $\Z_m$ is complete $(2,-1)$-avoiding.
		
		Its size is $|P|=2^n$.  Since $4^{n-1}<m$, we have $2^{n-1}<\sqrt m$.  Multiplying by $2$, we obtain
		\[
		|P|=2^n<2\sqrt m.
		\]
		This proves the theorem.
	\end{proof}
	
	\begin{proof}[Proof of \cref{thm:main}]
		For $m\ge81$, this is \cref{thm:threshold81}.  For $m<81$, the claim follows from the finite verification displayed in \cref{sec:small-moduli} and certified in \cref{app:small-table}.  Therefore $a(3\text{-}\AP,\Z_m)<2\sqrt m$ for every $m\ge1$.  The stronger complete $(2,-1)$-avoiding assertion for $m\ge81$ is exactly \cref{thm:threshold81}.
	\end{proof}

	\section{Complete $3$-AP-free sets in vector spaces} \label{sec:vector-spaces}
	
	%The cyclic construction above is independent of the vector-space question from \cite[Problem~5.1]{CsajbokNagy2025}.  
	Here we prove   a near-square-root bound for complete
	$3$-AP-free sets in every sufficiently large dimension over a fixed odd prime
	field, Theorem \ref{thm:vector-main}. Note that our construction is not a complete $(2,-1)$-avoiding construction in
	all dimensions; the final padding step uses  complete $3$-AP-free sets.

	\subsection{Endpoint-midpoint complete  sets and products}
	
	\begin{definition}[Endpoint-midpoint completeness]
		Let $G$ be an abelian group of odd order.  We shall say that a $3$-AP-free set
		$A\subseteq G$ is \emph{endpoint-midpoint complete } if every $x\in G\setminus A$ is
		covered in both possible ways: there both exist  distinct  pairs  $a,b\in A$ and $c,d\in A$ such that
		\[
		x=2a-b, \quad 2x=c+d.
		\]
		
	\end{definition}
	
	Thus every outside point can be used both as an endpoint and as the midpoint of
	a $3$-term arithmetic progression whose other two points lie in $A$.
	
	\begin{lemma}\label{lem:twosided-product}
		Let $G$ and $H$ be abelian groups of odd order.  Suppose that
		$A\subseteq G$ is two-sided $3$-AP-free and that
		$B\subseteq H$ is complete $3$-AP-free.  Then
		\[
		A\times B\subseteq G\times H
		\]
		is complete $3$-AP-free.
	\end{lemma}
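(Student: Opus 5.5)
Here "two-sided" is read as the endpoint-midpoint complete property just defined. The proof splits into two checks: first that $A\times B$ is $3$-AP-free, then that it is complete. Both are carried out coordinatewise.

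\emph{Freeness.} Since $G\times H$ has odd order, \cref{prop:relation}(iii) reduces this to ruling out non-trivial $3$-APs. Suppose $(a_1,b_1),(a_2,b_2),(a_3,b_3)$ is a $3$-AP of distinct points in $A\times B$. Then each coordinate sequence is a $3$-AP, possibly constant.
\begin{itemize}
\item The common difference $(d_G,d_H)$ is nonzero, so one of its components is nonzero.
\item In an odd-order group a nonzero difference gives three distinct terms, because $2d\neq 0$.
\item If $d_G\neq 0$, we get a non-trivial $3$-AP in $A$. This is impossible, since $A$ is $3$-AP-free as part of the definition.
\item If $d_H\ne0$, we get a non-trivial $3$-AP in $B$. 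This is also impossible.
\end{itemize}

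\emph{Completeness.} Let $(x,y)\notin A\times B$. We need two distinct points of $A\times B$ which, together with $(x,y)$, form a $3$-AP. The point may sit either as an endpoint, $(x,y)=2(a,b)-(a',b')$, or as a midpoint, $2(x,y)=(a,b)+(a',b')$. There are three cases.

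\begin{itemize}
\item \emph{Case $x\notin A$ and $y\in B$.} Use the endpoint covering of $x$ in $A$: $x=2a-a'$ with $a\ne a'$. Take $b=b'=y$. Then $(x,y)=2(a,y)-(a',y)$, and the two points $(a,y),(a',y)$ are distinct.
\item \emph{Case $y\notin B$.} Completeness of $B$ gives a progression in $B$ containing $y$. Either $y=2b-b'$ with $b\ne b'$ (endpoint type), or $2y=b+b'$ with $b\ne b'$ (midpoint type).
  \begin{itemize}
  \item If $x\in A$, take constant $A$-coordinates equal to $x$, in whichever pattern $B$ supplies.
  \item If $x\notin A$, $A$ can supply the same pattern as $B$, because $x$ is covered both ways. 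For the endpoint type use $x=2a-a'$, and for the midpoint type use $2x=c+c'$ with $c\neq c'$. Combining coordinatewise gives $(x,y)=2(a,b)-(a',b')$ or $2(x,y)=(c,b)+(c',b')$ respectively.
  \end{itemize}
\end{itemize}
In every case the two chosen points lie in $A\times B$ and are distinct, since their $H$-coordinates, or failing that their $G$-coordinates, differ.

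The main subtlety is this last subcase, $x\notin A$ and $y\notin B$. There the roles of $x$ and $y$ in their progressions must match, and this is exactly why $A$ needs both endpoint and midpoint coverings while $B$ needs only plain completeness. One also has to check that the resulting triple really consists of three distinct points; this follows from odd order, as in the freeness part.
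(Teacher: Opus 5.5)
Your proof is correct and follows the paper's argument essentially step for step. Freeness is by coordinatewise projection. Completeness uses the same three-way case split, and when $x\notin A$ and $y\notin B$ you match the endpoint/midpoint pattern of $y$ in $B$ with the corresponding covering of $x$ in $A$, exactly as the paper does. Your explicit uses of odd order (to get three distinct projected terms, and distinctness of the final triple) only spell out details the paper leaves implicit; the appeal to \cref{prop:relation}(iii) is unnecessary but harmless.
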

	
	\begin{proof}
		The product is $3$-AP-free: a non-trivial progression in $A\times B$ would have
		a non-trivial projection in at least one coordinate, contradicting the
		$3$-AP-freeness of the corresponding factor.
		
		Now let $(x,y)\in (G\times H)\setminus(A\times B)$.  If $x\notin A$ and
		$y\in B$, use a completion of $x$ in $A$ and keep the second
		coordinate fixed at $y$.  If $x\in A$ and $y\notin B$, use a completion of $y$
		in $B$ and keep the first coordinate fixed at $x$.
		
		Finally suppose that $x\notin A$ and $y\notin B$.  Since $B$ is complete, the
		point $y$ is either an endpoint or the midpoint of a progression determined by
		two points of $B$.  If $y=2b_1-b_2$, choose an endpoint representation
		$x=2a_1-a_2$ in $A$.  If $2y=b_1+b_2$, choose a midpoint representation
		$2x=a_1+a_2$ in $A$.  In either case the two product points
		$(a_1,b_1)$ and $(a_2,b_2)$ form a $3$-term progression with $(x,y)$.  Hence the
		product is complete.
	\end{proof}
	
	\subsection{Quadratic graphs} Suppose that $p$ is an odd prime.
	Throughout this subsection, a map \(Q:U\to W\) between \(\mathbb F_p\)-vector
	spaces is called a \textit{homogeneous quadratic map} if
	\[
	Q(\lambda u)=\lambda^2 Q(u)
	\qquad(\lambda\in\mathbb F_p,\ u\in U)
	\]
	and the polar map
	\[
	B_Q(u,v)=\frac{Q(u+v)-Q(u)-Q(v)}{2}
	\]
	is bilinear. This implies the identities
	\[
	Q(u+v)=Q(u)+2B_Q(u,v)+Q(v)
	\]
	and
	\[
	Q(u+t)+Q(u-t)=2Q(u)+2Q(t),
	\]
	which will be used repeatedly below.
	
	The next lemma is the core of our construction. It shows that graphs of quadratic maps can ensure $3$-AP-freeness and saturation at the same time. Moreover, their endpoint-midpoint completeness property will enable us to extend the construction to higher dimension as well.
	%a strengthened form of the  quadratic graph criterion.  The endpoint part gives complete $(2,-1)$-avoidance, while the midpoint part is what will allow us to multiply by a complete $3$-AP-free set in the remaining coordinates.
	
	\begin{lemma}[Quadratic graph criterion]\label{lem:quadratic-graph-twosided}
		Let $p$ be odd, let $U,W$ be vector spaces over $\F_p$, and let
		$Q:U\to W$ be a homogeneous quadratic map.  Suppose that
		\[
		Q^{-1}(0)=\{0\}
		\qquad\text{and}\qquad
		Q(U)=W .
		\]
		Then
		\[
		A_Q=\{(u,Q(u)):u\in U\}\subseteq U\oplus W
		\]
		is endpoint-midpoint complete  $3$-AP-free.  %Moreover, it is complete
		%$(2,-1)$-avoiding.
	\end{lemma}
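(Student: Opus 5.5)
The argument is direct computation with the polarization identities, in three steps: $3$-AP-freeness, endpoint coverage, and midpoint coverage. Since $p$ is odd, $U\oplus W$ has odd order. By \cref{prop:relation}(iii), $3$-AP-freeness is therefore the same as showing that $2x-y-z=0$ with $x,y,z\in A_Q$ forces $x=y=z$.

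\textbf{Freeness.} Write $y=(u-t,Q(u-t))$, $x=(u,Q(u))$ and $z$ with first coordinate $u+t$. The first coordinates force $z=(u+t,Q(u+t))$. The second coordinate condition is $2Q(u)=Q(u-t)+Q(u+t)$. By the identity $Q(u+t)+Q(u-t)=2Q(u)+2Q(t)$ this becomes $2Q(t)=0$, so $Q(t)=0$. The hypothesis $Q^{-1}(0)=\{0\}$ then gives $t=0$, so the progression is trivial.

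\textbf{Endpoint coverage.} Take $(x,w)\notin A_Q$, so $w\ne Q(x)$. We look for $a=(u,Q(u))$ and $b=(v,Q(v))$ with $2u-v=x$ and $2Q(u)-Q(v)=w$. Parametrize $u=x+s$ and $v=x+2s$. Expanding with $Q(x+\lambda s)=Q(x)+2\lambda B_Q(x,s)+\lambda^2Q(s)$, the linear terms $4B_Q(x,s)$ cancel, and
\[
2Q(x+s)-Q(x+2s)=Q(x)-2Q(s).
\]
So we need $Q(s)=(Q(x)-w)/2$, which is a nonzero element of $W$. Surjectivity $Q(U)=W$ provides such an $s$. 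Since the value is nonzero, $s\ne0$, hence $a\ne b$.

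\textbf{Midpoint coverage.} Again take $(x,w)\notin A_Q$. We need $c=(x+t,Q(x+t))$ and $d=(x-t,Q(x-t))$ with $Q(x+t)+Q(x-t)=2w$. The identity reduces this to $2Q(x)+2Q(t)=2w$, i.e.\ $Q(t)=w-Q(x)\ne0$. Surjectivity again gives $t$, and $t\ne0$, so $c\ne d$.

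The only step needing care is checking that the cross terms cancel in the endpoint case, since in the midpoint case cancellation is exactly the given identity. The main conceptual point is that both coverage problems reduce to hitting a prescribed nonzero value of $Q$. Surjectivity supplies a preimage, and anisotropy ($Q^{-1}(0)=\{0\}$) guarantees that the pairs found are genuinely distinct. As a by-product, the endpoint step shows that $A_Q$ is complete $(2,-1)$-avoiding.
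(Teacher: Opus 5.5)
Your proof is correct and follows the paper's argument essentially step by step: the same parametrization $u\pm t$ reducing freeness to $Q(t)=0$, the same endpoint pair with first coordinates $x+s,\,x+2s$ and $Q(s)=(Q(x)-w)/2$ (the paper's $Q(t)=-h/2$), and the same midpoint pair $x\pm t$ with $Q(t)=w-Q(x)$ (the paper's $Q(t)=h$). Two small remarks: the appeal to \cref{prop:relation}(iii) is unnecessary, since showing that $2x-y-z=0$ forces $x=y=z$ already rules out nontrivial progressions; and, contrary to your closing summary, the distinctness of the covering pairs comes from $Q(0)=0$ together with the target value being nonzero, not from anisotropy, which is used only in the freeness step.
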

	
	\begin{proof}
		First prove avoidance.  Suppose that
		\[
		(w,Q(w))=2(u,Q(u))-(v,Q(v))
		\]
		with $u,v,w\in U$.  The first coordinate gives $w=2u-v$.  Write
		$v=u-t$, so $w=u+t$.  The second coordinate gives
		\[
		Q(u+t)=2Q(u)-Q(u-t),
		\]
		or equivalently
		\[
		Q(u+t)+Q(u-t)=2Q(u).
		\]
		Since $Q$ is quadratic,
		\[
		Q(u+t)+Q(u-t)=2Q(u)+2Q(t).
		\]
		As $p$ is odd, this gives $Q(t)=0$, hence $t=0$.  Therefore $u=v=w$.
		Thus $A_Q$ is $(2,-1)$-avoiding, and in particular $3$-AP-free.
		
		Let $(u,Q(u)+h)\notin A_Q$, so $h\ne0$.  For endpoint-saturation, choose
		$t\in U$ with
		\[
		Q(t)=-h/2 .
		\]
		Then $t\ne0$.  A direct quadratic calculation gives
		\[
		2Q(u+t)-Q(u+2t)=Q(u)-2Q(t)=Q(u)+h,
		\]
		and hence
		\[
		(u,Q(u)+h)
		=2(u+t,Q(u+t))-(u+2t,Q(u+2t)).
		\]
		The two graph points are distinct, so $A_Q$ is complete $(2,-1)$-avoiding.
		
		For midpoint-saturation, choose $t\in U$ with $Q(t)=h$.  Again $t\ne0$, and
		\[
		\frac{Q(u+t)+Q(u-t)}2=Q(u)+Q(t)=Q(u)+h.
		\]
		Therefore
		\[
		(u,Q(u)+h)
		=\frac{(u+t,Q(u+t))+(u-t,Q(u-t))}{2},
		\]
		with two distinct graph points.  Thus $A_Q$ is endpoint-midpoint complete.
	\end{proof}
	
	\subsection{A subfield quadratic graph construction}
	
	We now construct quadratic maps satisfying the two hypotheses of
	\cref{lem:quadratic-graph-twosided}.  The construction uses a subfield
	$T\subseteq E$ of odd extension degree.
	
	\begin{theorem}[Subfield quadratic graph construction]\label{thm:subfieldflag}
		Let $p$ be an odd prime.  Let $L\ge1$ be odd, and let $k\ge1$ satisfy
		\[
		p^k>(2L-1)^2 .
		\]
		Put
		\[
		T=\F_{p^k},
		\qquad
		E=\F_{p^{kL}} .
		\]
		Choose a nonsquare $\delta\in E^*$.  Define
		\[
		Q:E\oplus T\longrightarrow E,
		\qquad
		Q(x,s)=x^2-\delta s^2 .
		\]
		Then $Q^{-1}(0)=\{(0,0)\}$ and $Q(E\oplus T)=E$.  Consequently
		\[
		A_Q=\{(x,s,Q(x,s)):x\in E,\ s\in T\}
		\subseteq (E\oplus T)\oplus E
		\]
		is a endpoint-midpoint complete  $3$-AP-free set. Regarded as a subset of $\F_p^{k(2L+1)}$, it has size
		\[
		|A_Q|=p^{k(L+1)} .
		\]
	\end{theorem}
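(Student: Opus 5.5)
The plan is to verify the two hypotheses of \cref{lem:quadratic-graph-twosided} for $U=E\oplus T$, $W=E$, regarded as $\F_p$-vector spaces. The conclusion then follows directly from that lemma, and the size count is immediate: $|A_Q|=|E|\cdot|T|=p^{kL}p^k=p^{k(L+1)}$, inside a space of dimension $kL+k+kL=k(2L+1)$.

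First, $Q$ is a homogeneous quadratic map. For $\lambda\in\F_p\subseteq T$ we have $Q(\lambda x,\lambda s)=\lambda^2Q(x,s)$. The polar map is $B_Q((x,s),(y,r))=xy-\delta sr$, which is $\F_p$-bilinear.

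Second, $Q^{-1}(0)=\{0\}$. If $x^2=\delta s^2$ with $s\ne0$, then $\delta=(x/s)^2$ is a square in $E$, a contradiction. So $s=0$, and then $x=0$.

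The main step is surjectivity. Fix $c\in E$; the case $c=0$ is trivial. We need some $s\in T$ for which $c+\delta s^2$ is a square in $E$, possibly $0$; then an $x\in E$ with $x^2=c+\delta s^2$ exists. Let $\eta$ be the quadratic character of $E$ and $\chi$ that of $T$, and put $q=p^k$. Since $[E:T]=L$ is odd, $\eta=\chi\circ N_{E/T}$: an element of $E^*$ is a square exactly when its norm is a square in $T^*$, because $N_{E/T}$ maps $E^*$ onto $T^*$ and the index computation $\frac{q^L-1}{q-1}\equiv L\equiv 1\pmod 2$ shows that squareness is preserved. For $s\in T$,
\[
N_{E/T}(c+\delta s^2)=\prod_{j=0}^{L-1}\bigl(c^{q^j}+\delta^{q^j}s^2\bigr)=:g(s),
\]
and $g$ is a polynomial of degree $2L$ with coefficients in $T$. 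Its roots are $\pm\sqrt{a_j}$ with $a_j=(-c/\delta)^{q^j}\ne0$, and these are distinct for distinct $a_j$ since $p$ is odd. Each distinct value $a_j$ occurs with multiplicity $L/|\text{orbit}|$, which is odd because it divides the odd number $L$. Hence $g$ is not a constant times a square.

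Weil's bound then gives $\bigl|\sum_{s\in T}\chi(g(s))\bigr|\le(2L-1)\sqrt q$. Let $Z$ be the number of zeros of $g$ in $T$. The number of $s\in T$ with $\chi(g(s))\in\{0,1\}$ equals $(q+Z+\sum_s\chi(g(s)))/2$, which is at least $(q-(2L-1)\sqrt q)/2>0$ by the hypothesis $q>(2L-1)^2$. Such an $s$ makes $c+\delta s^2$ a square or zero in $E$, so an $x$ exists and $Q$ is surjective.

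The only delicate point is this last step: justifying $\eta=\chi\circ N$ for odd $L$, and checking that $g$ is not a constant multiple of a square so that Weil's bound applies. I expect the multiplicity-parity argument above to handle the latter.
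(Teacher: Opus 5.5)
Your proposal is correct and follows essentially the same route as the paper: the identity $\chi_E=\chi_T\circ N_{E/T}$, the polynomial $\prod_j(c^{q^j}+\delta^{q^j}X^2)\in T[X]$ shown not to be a constant times a square by the odd-multiplicity argument, and the Weil bound combined with $q>(2L-1)^2$. Your direct count of the $s$ with $\chi(g(s))\in\{0,1\}$ is equivalent to the paper's contradiction from $S_h=-|T|$. One slip is cosmetic: $\eta=\chi\circ N_{E/T}$ holds for every $L$, since $\chi(N_{E/T}(z))=z^{\frac{q^L-1}{q-1}\cdot\frac{q-1}{2}}=z^{(q^L-1)/2}=\eta(z)$, so your index computation does not justify it; the oddness of $L$ is needed only in the multiplicity argument.
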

	
	\begin{proof}
		First suppose $Q(x,s)=0$.  Then
		$ x^2=\delta s^2.$
		If $s=0$, then $x=0$.  If $s\ne0$, then
		$\delta=(x/s)^2,$ 
		contradicting the choice of $\delta$ as a nonsquare in $E$.  Hence
		$Q^{-1}(0)=\{(0,0)\}$.
		
		It remains to prove that $Q$ is surjective.  Let $h\in E^*$.  We shall find
		$s\in T$ such that $h+\delta s^2$ is either zero or a square in $E$; then
		$h=x^2-\delta s^2$ for some $x\in E$.
		
		Let $\chi_E$ and $\chi_T$ be the quadratic characters of $E$ and $T$, extended
		by $\chi_E(0)=\chi_T(0)=0$. The norm $N_{E/T}(z)$ of an element $z\in E$ is $z^{(|E|-1)/(|T|-1)}$. Since
		\[
		\chi_E(z)=\chi_T(N_{E/T}(z)),
		\]
		we consider
		\[
		S_h=\sum_{s\in T}\chi_E(h+\delta s^2)
		=\sum_{s\in T}\chi_T\bigl(N_{E/T}(h+\delta s^2)\bigr).
		\]
		Put
		\[
		F_h(X)=(h+\delta X^2)(h^q+\delta^q X^2)\cdots (h^{q^{L-1}}+\delta^{q^{L-1}}X^2) \in T[X],
		\]
		which gives the same $T \rightarrow T$ function as $X \mapsto N_{E/T}(h+\delta X^2)$. 
		Then $\deg F_h\le2L$.  We claim that $F_h$ is not a constant multiple of a
		square in $T[X]$.  Write $\beta=-h/\delta\in E^*$. Over an algebraic closure,
		\[
		F_h(X)=N_{E/T}(\delta)
		\prod_{\sigma\in\mathrm{Gal}(E/T)}(X^2-\sigma(\beta)).
		\]
		If the orbit of $\beta$ under $\mathrm{Gal}(E/T)$ has size $d$, then each
		distinct conjugate occurs with multiplicity $L/d$.  Since $d\mid L$ and $L$ is
		odd, this multiplicity is odd.  Thus $F_h$ has a root of odd multiplicity over
		the algebraic closure, and cannot be a constant multiple of a square.
		
		By the Weil bound for quadratic character sums applied to the 
		polynomial $F_h$ \cite[Theorem~5.41]{LidlNiederreiter1997}, we have
		\[
		|S_h|\le (2L-1)p^{k/2} .
		\]
		The hypothesis $p^k>(2L-1)^2$ gives $|S_h|<p^k=|T|$.  If $h+\delta s^2$ were a
		nonsquare in $E$ for every $s\in T$, then $S_h=-|T|$, a contradiction.  If
		$h+\delta s^2=0$ for some $s$, then $h=0^2-\delta s^2$ is represented; otherwise
		there is an $s\in T$ for which $h+\delta s^2$ is a nonzero square in $E$.  This
		proves $Q(E\oplus T)=E$.
		
		The final assertions follow from \cref{lem:quadratic-graph-twosided}.  The
		ambient dimension is
		\[
		\dim_{\F_p}(E\oplus T\oplus E)=kL+k+kL=k(2L+1),
		\]
		and the graph size is
		\[
		|E||T|=p^{kL}p^k=p^{k(L+1)}.
		\]
	\end{proof}
	
	The surjectivity of $Q$ in Theorem \ref{thm:subfieldflag} can also be
	interpreted as a structured domination statement in Paley graphs.  This
	interpretation is related in spirit to work of Martin and Yip on
	subfields, power residues and generalized Paley graphs
	\cite{MartinYip2025}.  Questions on squares in finite fields and
	their geometric applications go back at least to work of Hirschfeld and
	Sz\H{o}nyi \cite{HirschfeldSzonyi1991}.
	
	\subsection{All dimensions}\label{subsec:all-dimensions}
	
	We now pass from the special dimensions supplied by the subfield quadratic graph
	construction to all dimensions.   Throughout this subsection \(p\) is fixed. Now we reiterate  \cref{thm:vector-main}.
	
	\begin{theorem}
		Let \(p\) be an odd prime and let \(\varepsilon>0\).  Then there is a constant
		\(C_{p,\varepsilon}\) such that, for every \(n\ge1\),
		\[
		a(3\text{-}\mathrm{AP},\mathbb F_p^n)
		\le
		C_{p,\varepsilon}\,n^{1+\varepsilon}p^{n/2}.
		\]
		In particular, for every fixed odd prime \(p\),
		\[
		a(3\text{-}\mathrm{AP},\mathbb F_p^n)
		=
		p^{n/2+o(n)}.
		\]
	\end{theorem}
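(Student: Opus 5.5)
Given $n$, we pick the special dimension $N=k(2L+1)\le n$ supplied by \cref{thm:subfieldflag}, apply \cref{lem:twosided-product} with $A=A_Q\subseteq\F_p^N$ (which is endpoint-midpoint complete), and take $B$ a complete $3$-AP-free set in $\F_p^{n-N}$ chosen as small as we can. The product is complete $3$-AP-free of size
\[
p^{k(L+1)}\,|B| = p^{N/2}\,p^{k/2}\,|B|.
\]
So we need two things: $k$ small, so that the excess $p^{k/2}$ is controlled, and a padding set $B$ in dimension $r=n-N$ of size about $p^{r/2}$ times a small factor.

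We handle the padding by recursion rather than by the trivial bound $|B|\le p^{r}$. Put $f(n)=a(3\text{-}\AP,\F_p^n)/p^{n/2}$. The product lemma gives
\[
f(n)\le p^{k/2}f(n-N).
\]
The residue $r=n-N$ is at most about $2k$, since $N$ ranges over multiples of $k$ by odd factors $2L+1$; we can first fix $k$ and then take the largest admissible $L$. For the base cases we use any complete $3$-AP-free set, for instance a maximal $3$-AP-free set. This gives $f(r)\le p^{r/2}\le p^{k}$ for $r<2k$. Applied once, this yields $f(n)\lesssim p^{3k/2}$. The constraint is $p^k>(2L-1)^2\approx (n/k)^2$, i.e.\ $p^{k}\approx n^2$ up to constants. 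Then $p^{3k/2}\approx n^{3}$, which is too big.

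To improve this, we iterate with a decreasing chain of scales. The first step uses $k_1$ with $p^{k_1}\approx C n^{2}$, but we do not simply pad the residue $r_1<2k_1$ of size $O(\log n)$. Instead, on the residue we apply the construction again with a much smaller $k_2$ satisfying $p^{k_2}>(2L_2-1)^2\approx(k_1/k_2)^2$, so $k_2=O(\log\log n)$, and so on. The costs multiply to roughly $p^{k_1/2}p^{k_2/2}\cdots$. The dominant factor is $p^{k_1/2}\approx n$, the second is about $\log n$, and the rest are lower order. This suggests the bound $n\cdot(\log n)^{O(1)}\le C_{p,\varepsilon}n^{1+\varepsilon}$, which explains the shape $n^{1+\varepsilon}$ in the statement.

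Concretely, I would prove by strong induction on $n$ that $f(n)\le C n^{1+\varepsilon}$. The step is: choose the minimal $k$ with $p^k>(2L-1)^2$ for $L\approx n/(2k)$ odd. This gives $p^{k/2}\le c_p\, n/k$. Then $r=n-k(2L+1)\le 2k+O(1)\le c\log n$, and the induction gives $f(r)\le C r^{1+\varepsilon}$. The required inequality is
\[
c_p\,(n/\log n)\,C\,(c\log n)^{1+\varepsilon}\le C n^{1+\varepsilon},
\]
which holds for large $n$ since $(\log n)^{\varepsilon}\ll n^{\varepsilon}$. Small $n$ are absorbed into $C_{p,\varepsilon}$.

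The main obstacle is the bookkeeping in choosing $k$ and $L$: $L$ must be odd, $p^k$ must exceed $(2L-1)^2$ but not by more than a factor of $p$ or so, and $r$ must land in a range where induction applies. It is easiest to let $L$ be determined by $k$ via $L=\max\{\text{odd }L:\ k(2L+1)\le n\}$, and to pick $k$ as the least integer with $p^k>(n/k)^2$. This gives $p^{k/2}\le p^{1/2}n/k$, and the residue is below $2k$ plus a small constant.
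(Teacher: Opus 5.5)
Your proposal is correct and is essentially the paper's proof. It uses the same least $k$ with $p^k>(n/k)^2$, the same largest odd $L$ with $k(2L+1)\le n$, the same product of $A_Q$ with an inductively obtained complete $3$-AP-free set in the remaining $r$ coordinates, and the same estimate $r^{1+\varepsilon}p^{k/2}=O(n(\log n)^{\varepsilon})$.

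There are two harmless slips. Because $L$ must be odd, the admissible factors $2L+1$ step by $4$, so you only get $r<4k$, not $r\le 2k+O(1)$. Also, minimality of $k$ gives $p^{k/2}\le\sqrt p\,n/(k-1)$ rather than $\sqrt p\,n/k$. Neither affects the argument, since only $r=O(\log n)$ and $p^{k/2}=O_p(n/\log n)$ are used.
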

	
	\begin{proof}
		Choose \(N_0=N_0(p,\varepsilon)\) large enough so that the estimates below hold
		for every \(n\ge N_0\).  We then prove the bound for \(n\ge N_0\) by strong
		induction on \(n\), and finally enlarge \(C_{p,\varepsilon}\) to cover the
		finitely many dimensions \(n<N_0\).
		
		Let \(n\) be large.  Choose \(k\ge2\) minimal with
		\[
		p^k>\left(\frac nk\right)^2 .
		\]
		By minimality,
		\[
		p^{k-1}\le \left(\frac{n}{k-1}\right)^2,
		\]
		and hence
		\[
		p^{k/2}
		\le
		\sqrt p\,\frac{n}{k-1}.
		\tag{5.1}
		\]
		
		Choose an odd integer \(L\ge1\) such that
		$ n_0=k(2L+1)\le n$
		is as large as possible.  Since \(L\) is odd, the admissible values of
		\(2L+1\) differ by \(4\).  Thus the remainder
		$ r=n-n_0$
		satisfies
		\[
		0\le r<4k .
		\tag{5.2}
		\]
		Moreover,
		\[
		2L-1<2L+1\le \frac nk,
		\]
		so
		\[
		p^k>\left(\frac nk\right)^2>(2L-1)^2 .
		\]
		Therefore the subfield quadratic graph construction applies.  It gives a two-sided
		complete \(3\)-AP-free set $A\subseteq \mathbb F_p^{n_0}$
		of size
		\[
		|A|
		=
		p^{k(L+1)}
		=
		p^{n_0/2}p^{k/2}.
		\]
		
		If \(r=0\), then by \((5.1)\),
		\[
		|A|
		\le
		\sqrt p\,\frac{n}{k-1}p^{n/2}.
		\]
		For all sufficiently large \(n\), this is at most
		\[
		n^{1+\varepsilon}p^{n/2},
		\]
		and the remaining finitely many cases are absorbed into the constant
		\(C_{p,\varepsilon}\).
		
		Assume now that \(r>0\).  Since \(r<n\), the induction hypothesis gives a
		complete \(3\)-AP-free set
		$ B\subseteq \mathbb F_p^r$
		with
		\[
		|B|\le
		C_{p,\varepsilon}\,r^{1+\varepsilon}p^{r/2}.
		\]
		Since \(A\) is endpoint-midpoint complete  and \(B\) is complete \(3\)-AP-free, the
		product lemma gives that
		\[
		A\times B
		\subseteq
		\mathbb F_p^{n_0}\times\mathbb F_p^r
		\cong
		\mathbb F_p^n
		\]
		is complete \(3\)-AP-free.  Its size is at most
		\[
		|A||B|
		\le
		C_{p,\varepsilon}\,
		r^{1+\varepsilon}p^{k/2}p^{n/2}.
		\]
		Using \((5.1)\) and \((5.2)\), we get
		\[
		r^{1+\varepsilon}p^{k/2}
		<
		(4k)^{1+\varepsilon}
		\sqrt p\,\frac{n}{k-1}.
		\]
		For all sufficiently large \(n\), the right-hand side is at most
		$   n^{1+\varepsilon}.$
		Indeed, \(k=O_p(\log n)\), so
		\[
		(4k)^{1+\varepsilon}\sqrt p\,\frac{n}{k-1}
		=
		O_{p,\varepsilon}\!\left(n(\log n)^\varepsilon\right)
		=
		o\!\left(n^{1+\varepsilon}\right).
		\]
		Thus, for all sufficiently large \(n\),
		\[
		|A||B|
		\le
		C_{p,\varepsilon}\,n^{1+\varepsilon}p^{n/2}.
		\]
		This completes the induction for \(n\ge N_0\).  Enlarging
		\(C_{p,\varepsilon}\) to cover the finitely many dimensions \(n<N_0\) proves
		the theorem for all \(n\ge1\).
		
		Finally,
		\[
		n^{1+\varepsilon}p^{n/2}=p^{n/2+o(n)}
		\]
		for fixed \(p\), and since \(\varepsilon>0\) was arbitrary, the last assertion
		follows.
	\end{proof}

	\begin{remark}
		The theorem is stated for complete $3$-AP-free sets.  The construction
		uses complete $(2,-1)$-avoiding, indeed endpoint-midpoint complete, sets in a large
		subspace, but the padding factor in the remaining $r<4k$ coordinates is only
		a complete $3$-AP-free set.  Therefore the argument does not provide the same bound for complete $(2,-1)$-avoiding sets in every
		dimension.
	\end{remark}
	
	%\subsection{Antecedents of the method}
	
	%Several ingredients of the argument have clear predecessors in the literature. The use of completion as a saturation condition, and especially the distinction
	%between ordinary $3$-AP saturation and fixed coefficient-vector saturation, is from the earlier work of the authors \cite{CsajbokNagy2025}.  The product step above is also in that spirit: fixed-role saturation is strong enough to be transported through products, while ordinary completeness alone is not always stable under products.  %The graph-of-a-function idea belongs to a broader finite-geometric tradition in which caps, complete caps and saturating sets are constructed from algebraic varieties, especially quadrics and related configurations; see Giulietti's survey \cite{Giulietti2013}.  The subfield-flag argument follows the same general pattern, but uses a quadratic map whose surjectivity is proved by a multiplicative character sum over a subfield.  This is close in spirit to norm/trace and character-sum methods in finite geometry and coding theory.  
	The recent algebraic capset constructions of Grace
	and Voloch \cite{GraceVoloch2026} show that algebraic equations over extension
	fields can produce very small complete capsets, in particular in characteristic
	$3$. Bishnoi noted in a blog post \cite{Bishnoi2026Blog} that earlier constructions of small complete projective caps, obtained by Cossidente, Csajb\'ok, Marino and Pavese \cite{completecap}, can be used as well to construct small complete affine caps in characteristic $3$.  The present construction is different in that it works for every fixed
	odd prime $p$ and gives a complete $3$-AP-free bound in all sufficiently large
	dimensions after the padding argument, but its main idea, using
	algebraic graphs over extension fields to obtain saturation, has these earlier
	precursors.
	
	\section{Concluding remarks}
	
	We gave complete $3$-AP-free sets of size less than $2\sqrt m$ in
	every cyclic group.
	The constant $2$ in the cyclic result is still above the $3/2$ target suggested
	in the earlier paper \cite{CsajbokNagy2025}.  The present cyclic construction is deliberately
	restricted to the stronger complete $(2,-1)$-avoiding property.  For comparison, 
	complete $3$-AP-freeness allows outside points to occur either as endpoints or
	as midpoints of progressions.  Exploiting this extra freedom may lead to
	smaller constructions in cyclic groups, and is closer to the complete-cap
	viewpoint in affine spaces.
	
	The vector-space section shows that a different algebraic idea gives, for every
	fixed odd prime $p$,
	\[
	a(3\text{-}\AP,\F_p^n)
	=p^{n/2+o(n)} .
	\]
	This settles the exponent version of the corresponding problem
	for  complete $3$-AP-free sets in vector spaces over fixed odd prime
	fields.  Giving a uniform absolute constant independent of $n$ and the same bound for complete $(2,-1)$-avoiding sets in every
	dimension  remain natural open problems.
	\bigskip
	
	\noindent \textbf{AI declaration.} While the authors found  the main ideas of the proofs, they used ChatGPT during drafting and checking of parts of the exposition and for the data of the cases $m\leq 81$ in Appendix \ref{app:small-table}. They take full responsibility for the mathematical content.

	\newpage

	\appendix
	
	\section{Small-modulus certificates}\label{app:small-table}
	
	The following table gives, for each $m\le80$, an explicit complete $3$-AP-free set $A_m\subseteq\Z_m$.  It also records complete $(2,-1)$-avoiding sets $B_m\subseteq\Z_m$ when they exist.  A dash in the last column means that no complete $(2,-1)$-avoiding subset of $\Z_m$ exists.  The verification is finite: for each displayed set one checks the defining conditions directly, while the non-existence entries in the $B_m$-column follow from exhaustive search over subsets of $\Z_m$, using translation symmetry.  The construction in \cref{sec:binary} covers all $m\ge81$.

	\begin{center}
		\tiny
		\setlength{\tabcolsep}{2pt}
		\begin{longtable}{c c >{\raggedright\arraybackslash}p{0.30\textwidth} c >{\raggedright\arraybackslash}p{0.38\textwidth}}
			\toprule
			$m$ & $|A_m|$ & complete $3$-AP-free $A_m$ & $|B_m|$ & complete $(2,-1)$-avoiding $B_m$ \\
			\midrule
			\endfirsthead
			\toprule
			$m$ & $|A_m|$ & complete $3$-AP-free $A_m$ & $|B_m|$ & complete $(2,-1)$-avoiding $B_m$ \\
			\midrule
			\endhead
			1 & 1 & \(\{0\}\) & 1 & \(\{0\}\) \\
			2 & 2 & \(\{0,1\}\) & \(\infty\) & -- \\
			3 & 2 & \(\{0,1\}\) & 2 & \(\{0,1\}\) \\
			4 & 2 & \(\{0,1\}\) & 2 & \(\{0,1\}\) \\
			5 & 2 & \(\{0,1\}\) & \(\infty\) & -- \\
			6 & 4 & \(\{0,1,3,4\}\) & \(\infty\) & -- \\
			7 & 3 & \(\{0,1,3\}\) & 3 & \(\{0,1,3\}\) \\
			8 & 4 & \(\{0,1,3,4\}\) & \(\infty\) & -- \\
			9 & 4 & \(\{0,1,3,4\}\) & 4 & \(\{0,1,3,4\}\) \\
			10 & 4 & \(\{0,1,3,4\}\) & 4 & \(\{0,1,3,4\}\) \\
			11 & 4 & \(\{0,1,3,4\}\) & 4 & \(\{0,1,3,4\}\) \\
			12 & 4 & \(\{0,1,3,4\}\) & 4 & \(\{0,1,3,4\}\) \\
			13 & 4 & \(\{0,1,3,4\}\) & 4 & \(\{0,1,3,4\}\) \\
			14 & 4 & \(\{0,1,3,4\}\) & 4 & \(\{0,1,3,12\}\) \\
			15 & 4 & \(\{0,1,3,4\}\) & 4 & \(\{0,1,4,5\}\) \\
			16 & 4 & \(\{0,1,3,14\}\) & 4 & \(\{0,1,4,5\}\) \\
			17 & 4 & \(\{0,1,3,4\}\) & \(\infty\) & -- \\
			18 & 4 & \(\{0,1,4,5\}\) & \(\infty\) & -- \\
			19 & 5 & \(\{0,1,3,4,9\}\) & \(\infty\) & -- \\
			20 & 4 & \(\{0,1,5,16\}\) & \(\infty\) & -- \\
			21 & 5 & \(\{0,1,3,8,9\}\) & 5 & \(\{0,1,4,14,16\}\) \\
			22 & 6 & \(\{0,1,3,4,9,10\}\) & 6 & \(\{0,1,3,4,9,10\}\) \\
			23 & 6 & \(\{0,1,3,4,9,10\}\) & 6 & \(\{0,1,3,4,10,11\}\) \\
			24 & 6 & \(\{0,1,3,4,10,11\}\) & 6 & \(\{0,1,3,4,10,11\}\) \\
			25 & 6 & \(\{0,1,3,4,9,20\}\) & 6 & \(\{0,1,3,7,8,10\}\) \\
			26 & 6 & \(\{0,1,3,7,8,10\}\) & 6 & \(\{0,1,3,7,8,10\}\) \\
			27 & 6 & \(\{0,1,3,4,11,20\}\) & 7 & \(\{0,1,3,4,9,11,12\}\) \\
			28 & 6 & \(\{0,1,3,7,8,10\}\) & 6 & \(\{0,1,3,7,8,10\}\) \\
			29 & 6 & \(\{0,1,3,4,9,24\}\) & 7 & \(\{0,1,3,4,9,10,23\}\) \\
			30 & 6 & \(\{0,1,5,7,24,26\}\) & 7 & \(\{0,1,3,7,8,12,26\}\) \\
			31 & 6 & \(\{0,1,3,7,20,22\}\) & 7 & \(\{0,1,3,4,9,10,26\}\) \\
			32 & 6 & \(\{0,1,3,8,10,11\}\) & 8 & \(\{0,1,3,4,9,10,12,13\}\) \\
			33 & 6 & \(\{0,1,3,7,8,10\}\) & 8 & \(\{0,1,3,4,9,10,12,13\}\) \\
			34 & 7 & \(\{0,1,3,4,9,10,21\}\) & 7 & \(\{0,1,3,7,8,12,30\}\) \\
			35 & 6 & \(\{0,1,3,7,8,10\}\) & 8 & \(\{0,1,3,4,9,10,12,13\}\) \\
			36 & 7 & \(\{0,1,3,4,11,13,28\}\) & 8 & \(\{0,1,3,4,9,10,12,13\}\) \\
			37 & 7 & \(\{0,1,3,4,9,10,26\}\) & 8 & \(\{0,1,3,4,9,10,12,13\}\) \\
			38 & 7 & \(\{0,1,3,7,27,28,32\}\) & 8 & \(\{0,1,3,4,9,10,12,13\}\) \\
			39 & 6 & \(\{0,1,6,9,14,15\}\) & 8 & \(\{0,1,3,4,9,10,12,13\}\) \\
			40 & 8 & \(\{0,1,3,4,9,10,12,13\}\) & 8 & \(\{0,1,3,4,9,10,12,13\}\) \\
			41 & 7 & \(\{0,1,3,4,12,13,32\}\) & 8 & \(\{0,1,3,4,10,11,13,14\}\) \\
			42 & 7 & \(\{0,1,3,7,32,33,36\}\) & 8 & \(\{0,1,3,4,10,11,13,14\}\) \\
			43 & 7 & \(\{0,1,3,7,15,16,35\}\) & 8 & \(\{0,1,3,4,10,11,13,14\}\) \\
			44 & 7 & \(\{0,1,3,7,15,32,36\}\) & 8 & \(\{0,1,4,5,15,16,19,20\}\) \\
			45 & 8 & \(\{0,1,3,4,11,12,14,15\}\) & 8 & \(\{0,1,4,5,15,16,19,20\}\) \\
			46 & 8 & \(\{0,1,3,4,11,12,14,15\}\) & 8 & \(\{0,1,4,5,15,16,19,20\}\) \\
			47 & 8 & \(\{0,1,3,4,12,13,15,16\}\) & 8 & \(\{0,1,4,5,15,16,19,20\}\) \\
			48 & 8 & \(\{0,1,3,4,12,13,15,16\}\) & 8 & \(\{0,1,4,5,15,16,19,20\}\) \\
			49 & 8 & \(\{0,1,3,4,12,13,15,16\}\) & 8 & \(\{0,1,4,5,15,16,19,20\}\) \\
			50 & 8 & \(\{0,1,3,4,13,14,16,17\}\) & 8 & \(\{0,1,4,5,15,16,19,20\}\) \\
			51 & 8 & \(\{0,1,3,4,13,14,16,17\}\) & 8 & \(\{0,1,4,5,15,16,19,20\}\) \\
			52 & 8 & \(\{0,1,3,4,13,14,16,17\}\) & 8 & \(\{0,1,4,5,15,16,19,20\}\) \\
			53 & 8 & \(\{0,1,4,5,13,14,17,18\}\) & 8 & \(\{0,1,4,5,15,16,19,20\}\) \\
			54 & 8 & \(\{0,1,4,5,13,14,17,18\}\) & 8 & \(\{0,1,4,5,15,16,19,20\}\) \\
			55 & 8 & \(\{0,1,4,5,13,14,17,18\}\) & 8 & \(\{0,1,4,5,15,16,19,20\}\) \\
			56 & 8 & \(\{0,1,4,5,14,15,18,19\}\) & 8 & \(\{0,1,4,5,15,16,19,20\}\) \\
			57 & 8 & \(\{0,1,4,5,14,15,18,19\}\) & 8 & \(\{0,1,4,5,15,16,19,20\}\) \\
			58 & 8 & \(\{0,1,4,5,14,15,18,19\}\) & 8 & \(\{0,1,4,5,15,16,19,20\}\) \\
			59 & 8 & \(\{0,1,4,5,15,16,19,20\}\) & 8 & \(\{0,1,4,5,15,16,19,20\}\) \\
			60 & 8 & \(\{0,1,4,5,15,16,19,20\}\) & 8 & \(\{0,1,4,5,15,16,19,20\}\) \\
			61 & 8 & \(\{0,1,4,5,15,16,19,20\}\) & 8 & \(\{0,1,4,5,15,16,19,20\}\) \\
			62 & 8 & \(\{0,1,4,5,16,17,20,21\}\) & 8 & \(\{0,1,4,5,16,17,20,21\}\) \\
			63 & 8 & \(\{0,1,4,5,16,17,20,21\}\) & 8 & \(\{0,1,4,5,16,17,20,21\}\) \\
			64 & 8 & \(\{0,1,4,5,16,17,20,21\}\) & 8 & \(\{0,1,4,5,16,17,20,21\}\) \\
			65 & 8 & \(\{0,1,3,25,28,38,41,63\}\) & 10 & \(\{0,1,6,10,24,26,40,44,49,50\}\) \\
			66 & 8 & \(\{0,1,8,21,28,29,46,49\}\) & 10 & \(\{0,1,5,12,27,30,32,40,51,57\}\) \\
			67 & 8 & \(\{0,1,13,14,17,18,30,31\}\) & 10 & \(\{0,1,3,4,9,12,19,22,39,42\}\) \\
			68 & 8 & \(\{0,5,14,19,31,36,51,56\}\) & 10 & \(\{0,1,4,9,11,20,33,42,60,63\}\) \\
			69 & 9 & \(\{0,1,3,4,10,26,28,44,54\}\) & 10 & \(\{0,1,17,22,30,37,40,56,61,67\}\) \\
			70 & 8 & \(\{0,1,6,19,25,46,52,65\}\) & 10 & \(\{0,1,3,4,13,14,46,47,53,54\}\) \\
			71 & 10 & \(\{0,1,7,15,21,25,33,34,44,62\}\) & 10 & \(\{0,1,11,12,20,26,31,33,37,44\}\) \\
			72 & 8 & \(\{0,2,23,25,41,43,54,56\}\) & 10 & \(\{0,1,3,7,8,10,23,30,47,54\}\) \\
			73 & 9 & \(\{0,1,3,7,15,31,36,54,63\}\) & 9 & \(\{0,1,3,7,15,31,36,54,63\}\) \\
			74 & 10 & \(\{0,1,3,4,9,10,22,30,45,55\}\) & 11 & \(\{0,2,5,13,32,43,45,48,56,60,63\}\) \\
			75 & 10 & \(\{0,1,17,18,27,28,30,31,63,64\}\)
			& 11 & \(\{0,1,3,4,14,15,18,23,37,55,64\}\) \\
			76 & 10 & \(\{0,1,36,37,44,45,62,63,66,67\}\)
			& 10 & \(\{0,1,3,4,12,15,22,25,45,48\}\) \\
			77 & 10 & \(\{0,1,3,4,14,17,19,22,23,53\}\)
			& 11 & \(\{0,1,3,7,8,10,18,27,34,55,62\}\) \\
			78 & 10 & \(\{0,4,10,15,22,47,54,59,65,69\}\)
			& 11 & \(\{0,1,4,5,16,17,57,59,60,66,67\}\) \\
			79 & 10 & \(\{0,1,5,13,16,35,56,59,60,75\}\)
			& 12 & \(\{0,1,3,4,9,10,22,23,25,26,31,32\}\) \\
			80 & 10 & \(\{0, 1, 4, 5, 23, 24, 31, 32, 69, 70\}\)
			& 11 & \(\{0,1,3,20,22,45,48,53,57,65,78\}\) \\
			\bottomrule
		\end{longtable}
	\end{center}

	The first three columns are used only to complete the  complete $3$-AP-free bound for small $m$; the last two columns record the stronger auxiliary condition used in the $R$-complete  set construction.


\begin{thebibliography}{99}
		
		\bibitem{Behrend1946}
		F. A. Behrend,
		On sets of integers which contain no three terms in arithmetical progression,
		\emph{Proceedings of the National Academy of Sciences of the United States of America} \textbf{32} (1946), 331--332.
		
		\bibitem{Bishnoi2026Blog}
		A. Bishnoi,
		Small complete cap sets,
		\emph{Anurag's Math Blog}, March 10, 2026.
		Available at
		\url{https://anuragbishnoi.wordpress.com/2026/03/10/small-complete-cap-sets/}.
		
		\bibitem{completecap} A. Cossidente,  B. Csajb\'ok, G. Marino, F. Pavese,  Small complete caps in $\mathrm{PG}(4n+1,q)$, \emph{Bull. Lond. Math. Soc.} \textbf{55} (2023), 522--535.
		
		\bibitem{CsajbokNagy2025}
		B. Csajb\'ok and Z. L. Nagy,
		Complete 3-term arithmetic progression free sets of small size in vector spaces and other abelian groups,
		\emph{Journal of Combinatorial Theory, Series A} \textbf{215} (2025), Article 106061.
		
		\bibitem{CLP2017}
		E. Croot, V. F. Lev and P. P. Pach,
		Progression-free sets in $\Z_4^n$ are exponentially small,
		\emph{Annals of Mathematics} \textbf{185} (2017), 331--337.
		
		\bibitem{EG2017}
		J. S. Ellenberg and D. Gijswijt,
		On large subsets of $\F_q^n$ with no three-term arithmetic progression,
		\emph{Annals of Mathematics} \textbf{185} (2017), 339--343.
		
		\bibitem{Fang2021}
		J. H. Fang,
		A note on AP 3-covering sequences,
		\emph{Periodica Mathematica Hungarica} \textbf{83} (2021), 67--70.
		
		\bibitem{FGR1987}
		P. Frankl, R. L. Graham and V. R\"odl,
		On subsets of abelian groups with no 3-term arithmetic progression,
		\emph{Journal of Combinatorial Theory, Series A} \textbf{45} (1987), 157--161.
		
		\bibitem{Giulietti2013}
		M. Giulietti,
		The geometry of covering codes: small complete caps and saturating sets in Galois spaces,
		in S. R. Blackburn, S. Gerke and M. Wildon (eds.), \emph{Surveys in Combinatorics 2013}, London Mathematical Society Lecture Note Series 409, Cambridge University Press, 2013, 51--90.
		
		\bibitem{GraceVoloch2026}
		C. Grace and J. F. Voloch,
		Algebraic capsets, (2026) Journal of Combinatorial Designs, to appear. 
		
		\bibitem{HirschfeldSzonyi1991}
		J. W. P. Hirschfeld and T. Sz\H{o}nyi,
		A problem on squares in a finite field and its application to geometry,
		in \emph{Advances in Finite Geometries and Designs}, Oxford University Press, 1991, 169--176.
		
		\bibitem{KSY2018}
		S. Z. Kiss, Cs. S\'andor and Q. H. Yang,
		On generalized Stanley sequences,
		\emph{Acta Mathematica Hungarica} \textbf{154} (2018), 501--510.
		
		
		
		
		\bibitem{LidlNiederreiter1997}
		R. Lidl and H. Niederreiter,
		\emph{Finite Fields}, second edition,
		Encyclopedia of Mathematics and its Applications 20,
		Cambridge University Press, 1997.
		
		\bibitem{MartinYip2025}
		G. Martin and C. H. Yip,
		Distribution of power residues over shifted subfields and maximal cliques in generalized Paley graphs,
		\emph{Proceedings of the American Mathematical Society} \textbf{153} (2025), no. 1, 109--124.
		
		\bibitem{Meshulam1995}
		R. Meshulam,
		On subsets of finite abelian groups with no 3-term arithmetic progressions,
		\emph{Journal of Combinatorial Theory, Series A} \textbf{71} (1995), 168--172.
		
		\bibitem{Roth1953}
		K. F. Roth,
		On certain sets of integers,
		\emph{Journal of the London Mathematical Society} \textbf{28} (1953), 104--109.
		
		\bibitem{Salem}
		R. Salem,  D. C. Spencer,
		On Sets of Integers Which Contain No Three Terms in Arithmetical Progression,
		\emph{Proceedings of the National Academy of Sciences of the United States of America}
		\textbf{28} (1942), 561–563. 
		
		\bibitem{Shkredov2006}
		I. D. Shkredov,
		Szemer\'edi's theorem and problems on arithmetic progressions,
		\emph{Russian Mathematical Surveys} \textbf{61} (2006), 1101--1166.
		
	\end{thebibliography}
\end{document}